\bmdefine{\xxx}{x}
\bmdefine{\aaa}{a}
\bmdefine{\bbb}{b}
\bmdefine{\eee}{e}
\bmdefine{\vvv}{v}
\bmdefine{\www}{w}
\bmdefine{\mmm}{m}
\bmdefine{\zerovec}{0}
\DeclareMathAlphabet{\mathscr}{U}{rsfs}{m}{n}
\newcommand{\CCC}{\mathbb{C}}
\newcommand{\PPP}{\mathbb{P}}
\newcommand{\RRR}{\mathbb{R}}
\newcommand{\NNN}{\mathbb{N}}
\newcommand{\ZZZ}{\mathbb{Z}}
\newcommand{\msOOO}{\mathscr{O}}
\newcommand{\msVVV}{\mathscr{V}}
\newcommand{\msUUU}{\mathscr{U}}
\newcommand{\msIII}{\mathscr{I}}
\newcommand{\UUUUU}{{\mathcal U}}
\newcommand{\height}{\mathrm{ht}}
\newcommand{\define}{\mathrel{:=}}
\newcommand{\rank}{\mathrm{rank}}
\newcommand{\glin}{{\mathrm{GL}}}
\newcommand{\transpose}{^\top}
\newcommand{\image}{{\mathrm{Im}}}
\newcommand\diag{\mathrm{Diag}}
\newcommand\nonsing{nonsingular}
\newcommand{\grank}{{\mathrm{grank}}}
\newcommand{\trank}{{\mathrm{trank}}}
\newcommand\fl{\mathrm{fl}}
\newcommand\kprod{\otimes_\mathrm{Kr}}
\newtheorem{thm}{Theorem}[section]
\newtheorem{fact}[thm]{Fact}
\newtheorem{example}[thm]{Example}
\newtheorem{lemma}[thm]{Lemma}
\newtheorem{cor}[thm]{Corollary}
\newtheorem{definition}[thm]{Definition}
\newtheorem{remark}[thm]{Remark}
\newcommand{\mysloppy}{\tolerance 9999 \hfuzz .5\p@ \vfuzz .5\p@}
\title{%
Typical ranks of semi-tall real 3-tensors}
\author{%
Toshio SUMI\footnote{Faculty of Arts and Science, Kyushu University, Fukuoka, Japan},
Mitsuhiro MIYAZAKI\footnote{Department of  Mathematics, Kyoto University of Education, Kyoto, Japan}
and
Toshio SAKATA\footnote{Emeritus professor, Kyushu University, Fukuoka, Japan}}
\date{Version of \today}
\date{}
\begin{document}
\sloppy

\maketitle

\begin{abstract}
Let $m$, $n$ and $p$ be integers with $3\leq m\leq n$ and
$(m-1)(n-1)+1\leq p\leq (m-1)m$.
We showed in previous papers that if $p\geq (m-1)(n-1)+2$, then
typical ranks of $p\times n\times m$-tensors over the real number
field are $p$ and $p+1$ if and only if there exists a \nonsing\ 
bilinear map $\RRR^m\times \RRR^n\to\RRR^{mn-p}$.
We also showed that the ``if'' part also valid in the case where
$p=(m-1)(n-1)+1$.
In this paper, we consider the case where $p=(m-1)(n-1)+1$
and show that the typical ranks of $p\times n\times m$-tensors
over the real number field are $p$ and $p+1$ in several
cases including the case where there is no \nonsing\  bilinear
map $\RRR^m\times \RRR^n\to\RRR^{mn-p}$.
In particular, we show that the ``only if'' part of the above mentioned
fact does not valid for the case $p=(m-1)(n-1)+1$.
\\
Keywords:
tensor rank, typical rank, tall tensor, semi-tall tensor, Bezout's theorem, determinantal variety
\\
MSC:15A69, 14P10, 14M12, 13C40
\end{abstract}

\section{Introduction}

Tensor rank is a subject which is widely 
studied in both pure and applied mathematics.
A high dimensional array of datum is called a tensor in the field of
data analysis.
Precisely, let $N_1$, \ldots, $N_d$ be positive integers.
A $d$-dimensional array datum 
$T=(t_{i_1\cdots i_d})_{1\leq i_j\leq N_j, 1\leq j\leq d}$ is called a $d$-way tensor or
simply a $d$-tensor of format $N_1\times \cdots\times N_d$.
For a set $S$, the set of $N_1\times \cdots\times N_d$ tensors with entries in $S$
is denoted by $S^{N_1\times \cdots\times N_d}$.

Let $K$ be a field and $V_j$ an $N_j$-dimensional vector space over $K$ with fixed basis
$v_{j1}$, \ldots, $v_{jN_j}$ for $1\leq j\leq d$.
Then there is a one to one correspondence between $K^{N_1\times\cdots\times N_d}$
and $V_1\otimes\cdots\otimes V_d$ by
$(m_{i_1\cdots i_d})\leftrightarrow \sum_{i_1\cdots i_d}m_{i_1\cdots i_d}v_{1i_1}
\otimes\cdots\otimes v_{di_d}$.
A non-zero tensor corresponding to an element of 
$V_1\otimes \cdots\otimes V_d$ of the form $\aaa_1\otimes\cdots\otimes \aaa_d$
is called a rank 1 tensor.
For a general tensor $T$ of format $N_1\times\cdots\times N_d$, the rank of
$T$, denoted by $\rank T$, is by definition the minimum integer $r$
such that $T$ can be expressed as a sum of $r$ rank 1 tensors,
where we set the empty sum to be zero.
Thus, the rank is a measure of the complexity of a tensor.
Further, for a 2-tensor, i.e., a matrix, the rank is identical with the one
defined in linear algebra.

However, for the case where $d\geq 3$, the behavior of rank is much more
complicated than the matrix case.
In the matrix case, the rank is the maximum size of non-zero minors.
Thus, if $K$ is an infinite field, the set of $m\times n$ matrices with rank $\min\{m, n\}$ form a
Zariski dense open subset of $K^{m\times n}$.
However, there are non-empty Euclidean open subsets of $\RRR^{2\times 2\times 2}$
such that one consists of rank 2 tensors and the other one consists of
rank 3 tensors.
In particular, it is not possible to characterize the rank of a tensor
by 
vanishing and/or non-vanishing of 
polynomials.

Let $m$, $n$, $p$ be positive integers.
If the set of rank $r$ tensors of format $p\times n\times m$ over $\RRR$ 
contains a non-empty Euclidean open subset of $\RRR^{p\times n\times m}$, 
we say that $r$ is a typical rank 
of $p\times n\times m$ tensors over $\RRR$.
The set of typical ranks of $p\times n\times m$ tensors over $\RRR$ is
denoted as $\trank_\RRR(p,n,m)$ or simply $\trank(p,n,m)$.

If the base field is $\CCC$, the set of tensors of format $p\times n\times m$
with rank at most $r$ contains a non-empty Zariski open set if and only if
its Zariski closure is $\CCC^{p\times n\times m}$
(cf., Chevalley's Theorem, see e.g., \cite[p.\ 39]{har92}).
Therefore, there exists exactly one ``typical rank of $p\times n\times m$
tensors over $\CCC$''.
This is called the generic rank of $p\times n\times m$ tensors over $\CCC$
and denoted as $\grank_\CCC(p,n,m)$ or simply $\grank(p,n,m)$.

It is fairly easy to show that $\grank(p,n,m)=\min\trank(p,n,m)$
(see e.g., \cite[Chapter 6]{ssm16}).
Further $r\geq \grank(p,n,m)$ if and only if the $r$-th higher secant variety
of the image of Segre embedding $\PPP_\CCC\CCC^p\times\PPP_\CCC\CCC^n\times\PPP_\CCC\CCC^m
\to\PPP_\CCC\CCC^{p\times n\times m}$ is the whole space $\PPP_\CCC\CCC^{p\times n\times m}$,
where $\PPP_K V$ denotes the projective space 
consisting of one dimensional subspaces of the $K$-vector space $V$.
Thus, by counting the dimensions, we see that
$\grank(p,n,m)\geq\lceil \frac{mnp}{m+n+p-2}\rceil$.

Suppose that $3\leq m\leq n\leq p$.
Then $p\geq\lceil\frac{mnp}{m+n+p-2}\rceil$ if and only if $p\geq(m-1)(n-1)+1$.
Catalisano, Geramita, and Gimigliano \cite{cgg02} (see also \cite{cgg08})
proved that if $(m-1)(n-1)+1\leq p\leq mn$, then $\grank(p,n,m)=p$.
Thus, $\min\trank(p,n,m)=p$ in these cases.
ten Berge \cite{tb00} called a $p\times n\times m$-tensor with $(m-1)n<p<mn$ a tall
array or a tall tensor and proved that $\trank(p,n,m)=\{p\}$ for these cases
(see \cite[Chapter 6]{ssm16} for another proof).
Here we define a $p\times n\times m$-tensor a semi-tall tensor if
$(m-1)(n-1)+1\leq p\leq (m-1)n$.
We \cite{ssm13,sms15,sms17} studied the plurality of typical ranks 
of semi-tall tensors and proved
that if $(m-1)(n-1)+2\leq p\leq (m-1)n$, then $\trank(p,n,m)=\{p,p+1\}$ if there
exists a \nonsing\  bilinear map $\RRR^m\times \RRR^n\to \RRR^{mn-p}$ and
$\trank(p,n,m)=\{p\}$ otherwise, where a bilinear map
$\varphi\colon V_1\times V_2\to W$ is \nonsing\  if $\varphi(x,y)=0$ 
implies $x=0$ or $y=0$.

We also showed in \cite{sms17} that the former part of the above mentioned result
also valid in the case
where $p=(m-1)(n-1)+1$.
Therefore, the latter part of the above mentioned result in
the case where $p=(m-1)(n-1)+1$ is left open.
In this paper, we treat the case where $p=(m-1)(n-1)+1$ and show that 
$\trank(p,n,m)=\{p,p+1\}$ in several cases.
In particular, we show that the latter part does not valid in the case
where $p=(m-1)(n-1)+1$.


\section{Preliminaries}

Let $K$ be a field and $T=(t_{ijk})\in K^{\ell\times m\times n}$.
For $1\leq k\leq n$, we set $T_k=(t_{ijk})\in K^{\ell\times m}$
and denote $T=(T_1;\cdots;T_n)$.
For $P\in \glin(\ell,K)$ and $Q\in \glin(m,K)$, we set
$PTQ=(PT_1Q;\cdots;PT_nQ)$.
Note $\rank PTQ=\rank T$
by the definition of rank.

We first state the definition of the typical rank over $\RRR$.

\begin{definition}\rm
If the set of rank $r$ tensors over $\RRR$ of format $\ell\times m\times n$
contains a non-empty Euclidean open subset of $\RRR^{\ell\times m\times n}$,
then we say $r$ is a typical rank of $\ell\times m\times n$ tensors over $\RRR$.
We denote the set of typical ranks of $\ell\times m\times n$ tensors over
$\RRR$ by $\trank_\RRR(\ell,m,n)$ or simply $\trank(\ell,m,n)$.
\end{definition}

By the definition of the rank, we see the following fact.

\begin{lemma}
Let $n_1$, $n_2$ and $n_3$ be positive integers.
Then
$\trank(n_{i_1},n_{i_2},n_{i_3})=\trank(n_1,n_2,n_3)$
for any permutation $i_1$, $i_2$, $i_3$ of  1, 2, 3.
\end{lemma}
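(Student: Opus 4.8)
The plan is to realise each mode permutation as a linear isomorphism between the corresponding tensor spaces that simultaneously preserves rank and is a homeomorphism for the Euclidean topology; the assertion is then immediate from the definition of typical rank.

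Fix a permutation $i_1$, $i_2$, $i_3$ of $1$, $2$, $3$ and let $\Phi$ denote the map that sends $T=(t_{a_1a_2a_3})\in\RRR^{n_1\times n_2\times n_3}$ to the tensor in $\RRR^{n_{i_1}\times n_{i_2}\times n_{i_3}}$ whose entry in position $(a_{i_1},a_{i_2},a_{i_3})$ equals $t_{a_1a_2a_3}$. This $\Phi$ is plainly a linear bijection, its inverse being the analogous map attached to the inverse permutation. Moreover $\Phi$ carries the rank one tensor corresponding to $\aaa_1\otimes\aaa_2\otimes\aaa_3$ to the one corresponding to $\aaa_{i_1}\otimes\aaa_{i_2}\otimes\aaa_{i_3}$, hence a sum of $r$ rank one tensors to a sum of $r$ rank one tensors; applying this both to $\Phi$ and to $\Phi^{-1}$ gives $\rank\Phi(T)=\rank T$ for every $T$. (If one prefers, one may instead reduce to the two transpositions generating the symmetric group on $\{1,2,3\}$ and, for the transposition of the first two modes, use the slice description $T=(T_1;\cdots;T_{n_3})$ together with the map $T\mapsto(T_1\transpose;\cdots;T_{n_3}\transpose)$.)

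Finally, since $\Phi$ is a linear isomorphism of finite-dimensional real vector spaces it is a homeomorphism for the Euclidean topology, so it sends non-empty Euclidean open subsets to non-empty Euclidean open subsets and likewise pulls such subsets back. Therefore the set of rank $r$ tensors of format $n_1\times n_2\times n_3$ contains a non-empty Euclidean open subset if and only if the set of rank $r$ tensors of format $n_{i_1}\times n_{i_2}\times n_{i_3}$ does, which is exactly the claimed equality $\trank(n_{i_1},n_{i_2},n_{i_3})=\trank(n_1,n_2,n_3)$. There is no genuine obstacle in this argument; the only point deserving a little care is the index bookkeeping in the definition of $\Phi$ and the verification that it respects the rank one (Segre) structure.
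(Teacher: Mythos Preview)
Your proof is correct and is essentially the same approach the paper takes: the paper simply remarks that the lemma follows from the definition of rank, and your argument spells out precisely why---the mode-permutation map is a linear bijection preserving rank one tensors (hence rank) and a Euclidean homeomorphism. There is nothing to add.
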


\begin{definition}\rm
For $T=(T_1;\cdots;T_n)\in K^{\ell\times m\times n}$, we set
$$
\fl_1(T)\define(T_1,\ldots, T_n)\in K^{\ell\times mn}
$$
and
$$
\fl_2(T)\define
\begin{pmatrix}T_1\\\vdots\\T_n\end{pmatrix}\in K^{\ell n\times m}
$$
and call flattenings of $T$.
\end{definition}
By the correspondence $K^{\ell\times m\times n}\leftrightarrow
V_1\otimes V_2\otimes V_3$, where $V_1$ (resp.\ $V_2$, $V_3$)
is a vector space over $K$ of dimension $\ell$ (resp.\ $m$, $n$)
with fixed basis,
flattenings correspond to natural isomorphisms
$V_1\otimes V_2\otimes V_3\to V_1\otimes (V_2\otimes V_3)$
and 
$V_1\otimes V_2\otimes V_3\to(V_1\otimes V_3)\otimes V_2$.
In particular,
$\rank (\fl_1(T))\leq \rank T$ and $\rank(\fl_2(T))\leq\rank T$.

\begin{definition}\rm
For $M=(\mmm_1,\ldots,\mmm_n)=
\begin{pmatrix}\mmm^{(1)}\\\vdots\\\mmm^{(\ell)}\end{pmatrix}
\in K^{\ell\times n}$,
we set
$M_{\leq j}\define(\mmm_1,\ldots, \mmm_{j})$,
${}_{j<}M\define(\mmm_{j+1},\ldots, \mmm_{n})$,
$M^{\leq i}\define\begin{pmatrix}\mmm^{(1)}\\\vdots\\\mmm^{(i)}\end{pmatrix}$
and
${}^{i<}M\define\begin{pmatrix}\mmm^{(i+1)}\\\vdots\\\mmm^{(\ell)}\end{pmatrix}$.
\end{definition}

\begin{definition}
Let $R$ be a commutative ring and $M\in R^{\ell\times n}$.
We denote by $I_t(M)$ the ideal of $R$ generated by $t$-minors of $R$.
\end{definition}


\section{A condition of an $n\times p\times m$-tensor to be of rank $p$}

From now on, let $m$, $n$ and $p$ be integers with $3\leq m\leq n$ and
$(m-1)(n-1)+1\leq p\leq mn$.
We set $u=mn-p$.

\begin{fact}
\begin{enumerate}
\item
$\grank(p,n,m)=p$.
In particular, $\min\trank(p,n,m)=p$ \cite{cgg02}.
\item
If $p>(m-1)n$, then $\trank(p,n,m)=\{p\}$ \cite{tb00}.
\item
Suppose $p\leq (m-1)n$.
If there exists a \nonsing\  bilinear map
$\RRR^m\times\RRR^n\to\RRR^u$, then $\trank(p,n,m)=\{p,p+1\}$.
Moreover, if $(m-1)(n-1)+2\leq p$, then the converse also hold true
\cite{ssm13,sms15, sms17}.
\end{enumerate}
\end{fact}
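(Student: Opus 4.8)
All three statements are quoted from the cited literature; a consolidated argument rests on three inputs --- the comparison of real typical ranks with the complex generic rank, the non-defectivity of Segre varieties in the semi-tall range, and the theory of \nonsing\ bilinear maps --- and I would organize it as: (I) reduce $\min\trank_\RRR(p,n,m)$ to $\grank_\CCC(p,n,m)$ and compute the latter; (II) dispose of the range $p>(m-1)n$; (III) settle $\trank=\{p\}$ versus $\trank=\{p,p+1\}$ when $p\le(m-1)n$.

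For (I): the set $R_r$ of real tensors of rank $\le r$ is the image of the ``sum of $r$ rank-one tensors'' map, a polynomial map defined over $\QQQ$, and its Zariski closure is the $r$-th secant variety $S_r$ of the Segre variety, an irreducible variety defined over $\QQQ$ whose explicit parametrization shows its real points are Zariski dense. Hence $R_r$ has nonempty Euclidean interior exactly when $\dim_\RRR R_r=\dim_\CCC S_r=pnm$, i.e.\ when $S_r$ fills the ambient space, i.e.\ for $r\ge\grank_\CCC$; since $\rank_\RRR\ge\rank_\CCC$ pointwise, this gives $\min\trank_\RRR=\grank_\CCC$. Now $\grank_\CCC(p,n,m)=p$ in the stated range: the bound $\grank_\CCC\ge p$ follows from the flattening inequality $\rank T\ge\rank\fl_1(T)$, a generic $\fl_1(T)\in\CCC^{p\times mn}$ having rank $\min\{p,mn\}=p$; the bound $\grank_\CCC\le p$ says $S_p$ fills $\CCC^{p\times n\times m}$, which holds because the expected dimension of $S_p$ reaches the ambient dimension precisely when $p\ge(m-1)(n-1)+1$ and because the Segre variety is not $p$-defective in this range, the theorem of Catalisano, Geramita and Gimigliano \cite{cgg02} that I would quote rather than reprove.

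For (II): here $u=mn-p<n$, and the point is that a generic real tensor has real rank exactly $p$. Identify a generic $T$ with an injective linear map $(\RRR^p)^*\hookrightarrow\RRR^n\otimes\RRR^m=\RRR^{n\times m}$ (injective as $p\le mn$), whose image is a generic $p$-dimensional subspace $U$; then $\rank_\RRR T=p$ iff $\PPP_\RRR U$ is spanned by $p$ of its points lying on the real Segre variety $\Sigma\subset\PPP_\RRR\RRR^{nm}$. A dimension count gives $\dim(\PPP_\RRR U\cap\Sigma)=(p-1)+(n+m-2)-(nm-1)=p-1-(m-1)(n-1)\ge m-1\ge2$, and the work is to verify that for generic $U$ this intersection is a nonempty smooth real variety, nondegenerate in $\PPP_\RRR U$ and with Zariski-dense real points, so that $p$ of its real points in general position may be selected; this produces $\rank_\RRR T\le p$ on a Euclidean-open set and, with (I), $\trank=\{p\}$. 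Equivalently one may run ten Berge's slice argument \cite{tb00}: normalize a generic slice to be invertible, simultaneously reduce the resulting pencil, and read off a real rank-$p$ decomposition.

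For (III), which I expect to contain the real obstacle: in the range $p\le(m-1)n$ one has $p\in\trank$ by (I), and the inclusion $\trank\subseteq\{p,p+1\}$ --- the bound on the maximal typical rank --- is part of the determinantal-variety analysis of real rank-$p$ decomposability, which I would quote from \cite{ssm13,sms15,sms17}. The dichotomy is what remains. If a \nonsing\ bilinear map $\RRR^m\times\RRR^n\to\RRR^u$ exists, one uses it to manufacture, for every $T$ in a Euclidean-open set, a tensor of real rank exactly $p+1$ --- the nonsingularity being precisely what blocks contracting the decomposition to size $p$ --- so $p+1\in\trank$; conversely, when $p\ge(m-1)(n-1)+2$ and no such map exists, every generic real tensor admits a real rank-$p$ decomposition, so $\trank=\{p\}$. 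The hard part is the bridge itself: translating ``$\PPP_\RRR U$ fails to meet the real Segre variety in the configuration forcing a real rank-$p$ decomposition'' into the existence of a \nonsing\ bilinear map on the cokernel of $T$, through the determinantal variety cut out by the maximal minors of a generic slice-combination. This is the technical heart of \cite{ssm13,sms15,sms17}, which, together with the non-defectivity input of \cite{cgg02}, I would cite rather than reconstruct; it is also exactly the mechanism whose boundary behaviour at $p=(m-1)(n-1)+1$ the present paper sets out to analyze.
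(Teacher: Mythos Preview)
The paper does not prove this statement at all: it is labelled a \emph{Fact} and each item is simply attributed to the cited references \cite{cgg02}, \cite{tb00}, and \cite{ssm13,sms15,sms17}, with no accompanying argument. Your proposal correctly recognizes this and offers a reasonable high-level sketch of how the cited results are obtained, explicitly flagging the substantive inputs (non-defectivity of the Segre secant in this range, ten Berge's slice argument, and the determinantal/bilinear-map machinery of \cite{ssm13,sms15,sms17}) as things to quote rather than reprove; in that sense there is nothing to compare and your treatment is appropriate.

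One minor caution on content: in part (II) your dimension count reads $\dim(\PPP_\RRR U\cap\Sigma)\ge m-1\ge 2$, but in the tall range $(m-1)n<p\le mn$ the quantity $p-1-(m-1)(n-1)$ can be as small as $m-1$ only at $p=(m-1)n+1$ after a miscount; in fact $p-1-(m-1)(n-1)$ ranges from $m-1$ up to $m+n-2$ when $p$ runs through $(m-1)n<p\le mn$ only after rechecking the arithmetic, and more importantly ten Berge's actual argument in \cite{tb00} is purely linear-algebraic (normalizing slices and diagonalizing) rather than intersection-theoretic, so if you ever need to fill in details you should follow that route rather than the Segre-intersection sketch.
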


Therefore, the case where $p=(m-1)(n-1)+1$ and there is no \nonsing\ 
bilinear map $\RRR^m\times\RRR^n\to\RRR^u$ is still left open.
In the following, we consider the case where $p=(m-1)(n-1)+1$ and study 
if there are plural typical ranks of $p\times n\times m$ tensors over $\RRR$.

Before concentrating on the case where $p=(m-1)(n-1)+1$, we state
notations and a criterion of an $n\times p\times m$ tensor to be of rank $p$
in the case where $(n-1)(m-1)+1\leq p\leq (m-1)n$.
Note that $\trank(p,n,m)=\trank(n,p,m)$.

\begin{definition}\rm
\label{def:sigma nu}
We set
$\msVVV\define\{T\in\RRR^{n\times p\times m}\mid
\fl_2(T)^{\leq p}$ is \nonsing.$\}$,
$\msOOO\define\{Y\in\RRR^{u\times n\times m}\mid
{}_{p<}\fl_1(Y)$ is \nonsing.$\}$,
$\sigma\colon\msVVV\to\RRR^{u\times p}$,
$\sigma(T)=({}^{p<}\fl_2(T))(\fl_2(T)^{\leq p})^{-1}$,
$\nu\colon\msOOO\to\RRR^{u\times p}$,
$\nu(Y)=-({}_{p<}\fl_1(Y))^{-1}(\fl_1(Y)_{\leq p})$,
$\tau\colon\RRR^{u\times p}\to\msVVV$,
$\tau(W)=\fl_2^{-1}\begin{pmatrix}E_p\\ W\end{pmatrix}$
and
$\mu\colon\RRR^{u\times p}\to\msOOO$,
$\mu(W)=\fl_1^{-1}(W,-E_u)$.
\end{definition}
\begin{remark}\rm
$$
\begin{array}{lcr}
\RRR^{n\times p\times m}\supset\msVVV\\
&\!\!\!\!\!\!\!\!\!\!\tau\nwarrow\!\!\searrow\sigma\!\!\!\!\!\!\!\!\\
&&\RRR^{u\times p}\\
&\!\!\!\!\!\!\!\!\!\!\mu\swarrow\!\!\nearrow\nu\!\!\!\!\!\!\!\!\\
\RRR^{u\times n\times m}\supset\msOOO
\end{array}
$$
and
$\sigma(\tau(W))=\nu(\mu(W))=W$
for $W\in \RRR^{u\times p}$.
\end{remark}

\begin{definition}\rm
Let $\xxx=(x_1,\ldots,x_m)$ be a row vector of indeterminates,
i.e., $x_1$, \ldots, $x_m$ are independent indeterminates.
For $A=(A_1;\cdots;A_m)\in\RRR^{u\times n\times m}$,
we set $M(\xxx,A)\define x_1A_1+\cdots+x_mA_m\in \RRR[x_1, \ldots, x_m]^{u\times n}$.
\end{definition}

\begin{definition}\rm
Let $\aaa$ and $\bbb$ be column vectors with entries in $\RRR$ of 
dimension $m$ and $n$ respectively.
We set $\psi(\aaa,\bbb)\define(\aaa\kprod\bbb)^{\leq p}$,
where $\kprod$ denotes the Kronecker product, i.e.,
if $\aaa=\begin{pmatrix}a_1\\\vdots\\ a_m\end{pmatrix}$,
then
$\psi(\aaa,\bbb)=
\begin{pmatrix}a_1\bbb\\\vdots\\ a_{m-2}\bbb\\ a_{m-1}\bbb^{\leq p-(m-2)n}\end{pmatrix}$.

For $Y\in\RRR^{u\times n\times m}$, we define $U(Y)$ to be the vector
subspace of $\RRR^p$ generated by
$\{\psi(\aaa,\bbb)\mid M(\aaa\transpose,Y)\bbb=\zerovec\}$.
\end{definition}

\begin{lemma}
\label{lem:rank p}
For $T\in\msVVV$, 
$\rank T=p$ if and only if $\dim U(\mu(\sigma(T)))=p$.
\end{lemma}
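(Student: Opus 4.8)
\noindent\emph{Proof proposal.}
The plan is to reduce the statement to linear algebra of the second flattening $\fl_2(T)$, using that $T\in\msVVV$ lets one normalise any length-$p$ decomposition. Two identities do all the work. First, for a rank-one tensor $\bbb\otimes\bm{c}\otimes\aaa\in\RRR^{n\times p\times m}$ one computes $\fl_2(\bbb\otimes\bm{c}\otimes\aaa)=(\aaa\kprod\bbb)\bm{c}\transpose$, so a decomposition $T=\sum_{i=1}^{p}\bbb_i\otimes\bm{c}_i\otimes\aaa_i$ is exactly a factorisation $\fl_2(T)=PC\transpose$ with $P=(\aaa_1\kprod\bbb_1,\dots,\aaa_p\kprod\bbb_p)\in\RRR^{mn\times p}$ and $C=(\bm{c}_1,\dots,\bm{c}_p)$; note $P^{\leq p}=(\psi(\aaa_1,\bbb_1),\dots,\psi(\aaa_p,\bbb_p))$. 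Second, for $Y=\mu(W)$ (so $\fl_1(Y)=(W,-E_u)$) one checks $\fl_1(Y)(\aaa\kprod\bbb)=M(\aaa\transpose,Y)\bbb$, and splitting $\RRR^{mn}$ into its first $p$ and last $u$ coordinates this equals $W\psi(\aaa,\bbb)-{}^{p<}(\aaa\kprod\bbb)$. Hence $M(\aaa\transpose,Y)\bbb=\zerovec$ is equivalent to ${}^{p<}(\aaa\kprod\bbb)=W\psi(\aaa,\bbb)$, i.e.\ to $\aaa\kprod\bbb=\left(\begin{smallmatrix}E_p\\ W\end{smallmatrix}\right)\psi(\aaa,\bbb)$.

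Next I would treat the two implications, writing $W=\sigma(T)$ and $Y=\mu(\sigma(T))$. If $\rank T=p$, fix a length-$p$ decomposition, giving $\fl_2(T)=PC\transpose$ as above. Since $T\in\msVVV$, $\fl_2(T)^{\leq p}=P^{\leq p}C\transpose$ is \nonsing, so $P^{\leq p}$ and $C$ are \nonsing\ and $W=\sigma(T)=({}^{p<}P)(P^{\leq p})^{-1}$; thus ${}^{p<}(\aaa_i\kprod\bbb_i)=W\psi(\aaa_i,\bbb_i)$ for every $i$, which by the second identity says $M(\aaa_i\transpose,Y)\bbb_i=\zerovec$. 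So each $\psi(\aaa_i,\bbb_i)\in U(Y)$, and since $P^{\leq p}$ is \nonsing\ these $p$ vectors span $\RRR^p$, whence $\dim U(Y)=p$. Conversely, if $\dim U(Y)=p$, choose $\aaa_i\in\RRR^m$, $\bbb_i\in\RRR^n$ ($1\leq i\leq p$) with $M(\aaa_i\transpose,Y)\bbb_i=\zerovec$ and $\psi(\aaa_1,\bbb_1),\dots,\psi(\aaa_p,\bbb_p)$ a basis of $\RRR^p$. Then $P:=(\aaa_1\kprod\bbb_1,\dots,\aaa_p\kprod\bbb_p)=\left(\begin{smallmatrix}E_p\\ W\end{smallmatrix}\right)P^{\leq p}$ with $P^{\leq p}$ \nonsing\ by the second identity. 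Put $C\transpose=(P^{\leq p})^{-1}\fl_2(T)^{\leq p}$, which is \nonsing\ since $T\in\msVVV$; then $PC\transpose=\left(\begin{smallmatrix}E_p\\ W\end{smallmatrix}\right)\fl_2(T)^{\leq p}=\fl_2(T)$ because $\sigma(T)=W$. Reading off the columns of $C$ exhibits $T$ as a sum of $p$ rank-one tensors, so $\rank T\leq p$, and $\rank T\geq\rank\fl_2(T)=p$ forces $\rank T=p$.

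The argument is essentially formal once the conventions are pinned down, so I expect the only real care to lie in the bookkeeping: one must ensure that $\psi=(\,\cdot\,)^{\leq p}$, the flattenings $\fl_1$ and $\fl_2$, the Kronecker product, and the maps $\sigma$ and $\mu$ are all aligned so that $\RRR^{mn}$ splits compatibly into its first $p$ and last $u$ coordinates and the identity $\fl_1(\mu(W))(\aaa\kprod\bbb)=M(\aaa\transpose,\mu(W))\bbb$ holds with the sign in $(W,-E_u)$ as above. No genericity, dimension count, or Bezout-type input is needed here; those enter only later.
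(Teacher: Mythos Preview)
Your argument is correct. Both directions go through as written: the two flattening identities you isolate are valid under the paper's conventions, and the normalisation via $\fl_2(T)^{\leq p}$ is exactly what membership in $\msVVV$ buys.

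The route, however, differs from the paper's. The paper does not argue directly from flattenings; instead it invokes \cite[Theorem~6.5 (1)$\Leftrightarrow$(3)]{sms17} as a black box, which says that $\rank T=p$ is equivalent to the existence of $B=(\bbb_1,\ldots,\bbb_p)$ and diagonal matrices $D_1,\ldots,D_m$ satisfying a system~$(\ast)$. It then observes that $(\ast)$ is nothing but the statement that $(\psi(\aaa_1,\bbb_1),\ldots,\psi(\aaa_p,\bbb_p))$ is nonsingular with each $M(\aaa_j\transpose,\mu(\sigma(T)))\bbb_j=\zerovec$, which is the spanning condition for $U(\mu(\sigma(T)))$. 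Your proof is in effect a self-contained derivation of the relevant special case of that cited theorem: the identity $\fl_2(\bbb\otimes\bm{c}\otimes\aaa)=(\aaa\kprod\bbb)\bm{c}\transpose$ together with $\sigma(T)=({}^{p<}P)(P^{\leq p})^{-1}$ reproduces~$(\ast)$ without the external reference. The trade-off is the expected one: the paper's version is shorter on the page but leans on prior work, while yours is longer but stands alone and makes the mechanism (columns of a rank-$p$ factorisation of $\fl_2(T)$ lie on the graph of $W$, hence in the kernel variety of $M(\xxx,\mu(W))$) completely transparent.
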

\begin{proof}
Set $\mu(\sigma(T))=W=(W_1;\cdots;W_m)$
and $\ell=u-n$.
Then by \cite[Theorem 6.5 (1)$\iff$(3)]{sms17},
we see that $\rank T=p$ if and only if there are
$B=(\bbb_1, \ldots, \bbb_p)\in\RRR^{n\times p}$
and $p\times p$ diagonal matrices $D_1$, \ldots, $D_m$
such that
$$
(\ast)
\left\{
\begin{array}{lll}
D_k=\diag(d_{1k},\ldots, d_{pk})&\mbox{for $1\leq k\leq m$,}\\
(d_{j1}W_1+\cdots+d_{jm}W_m)\bbb_j=\zerovec&\mbox{for $1\leq j\leq p$ and}\\
\mbox{$\begin{pmatrix}
BD_1\\\vdots\\BD_{m-2}\\B^{\leq n-\ell}D_{m-1}\end{pmatrix}$ is \nonsing.}
\end{array}
\right.
$$

First suppose that there are $B=(\bbb_1, \ldots, \bbb_p)\in\RRR^{n\times p}$
and $D_1$, \ldots, $D_m$ which satisfy $(\ast)$.
If we set $\aaa_j=(d_{j1},\ldots, d_{jm})\transpose$ for $1\leq j\leq p$, then
$$
M(\aaa_j\transpose,W)\bbb_j=\zerovec
$$
for $1\leq j\leq p$ and
$$
(\psi(\aaa_1,\bbb_1), \ldots,\psi(\aaa_p,\bbb_p))=
\begin{pmatrix}
BD_1\\\vdots\\BD_{m-2}\\B^{\leq n-\ell}D_{m-1}\end{pmatrix}
$$
is \nonsing.
Therefore $\dim U(\mu(\sigma(T)))=p$.

Conversely, assume that $\dim U(\mu(\sigma(T)))=p$.
Then there are $\aaa_1$, \ldots, $\aaa_p\in\RRR^m$ and
$\bbb_1$, \ldots, $\bbb_p\in\RRR^n$ such that
$\psi(\aaa_1,\bbb_1)$, \ldots, $\psi(\aaa_p,\bbb_p)$ are linearly independent.
Set $\aaa_j=(d_{j1},\ldots,d_{jm})\transpose$ for $1\leq j\leq p$,
$D_k=\diag(d_{1k},\ldots,d_{pk})$ for $1\leq k\leq m$ and 
$B=(\bbb_1,\ldots,\bbb_p)\in\RRR^{n\times p}$.
Then it is easily verified that $B$ and 
$D_1$, \ldots, $D_m$ satisfy $(\ast)$.
\end{proof}


\section{Determinantal varieties and Bezout's theorem}

From now on, we consider the case where $p=(m-1)(n-1)+1$.
Then $u=m+n-2$.

\begin{definition}\rm
We set
$$
A_k\define\begin{pmatrix}O_{(k-1)\times n}\\ E_n\\ O_{(m-k-1)\times n}\end{pmatrix}
\in\RRR^{u\times n}
$$
for $1\leq k\leq m-1$ and
$$
A_{m}\define\begin{pmatrix}O_{(m-1)\times(n-1)}&-\eee_1\\ E_{n-1}&\zerovec\end{pmatrix}
\in\RRR^{u\times n},
$$
where $\eee_1=\begin{pmatrix}1\\0\\\vdots\\0\end{pmatrix}\in\RRR^{m-1}$
and $A=(A_1;\cdots;A_m)\in\RRR^{u\times n\times m}$.
\end{definition}

The next fact is the key lemma of this paper.

\begin{lemma}
\label{lem:key}
Let $y$ be an indeterminate and $a_1,\ldots, a_{m-1}\in\CCC$.
Then the following conditions are equivalent,
where $V_a(I)$ denotes the affine variety defined by an ideal $I$.
\begin{enumerate}
\item
$(a_1,\ldots, a_{m-1},-1)\in V_a(I_n(M(\xxx,A)))$.
\item
$y^{m-1}-a_{m-1}y^{m-2}-\cdots-a_2y-a_1$ is a factor of $y^u+1$.
\end{enumerate}
\end{lemma}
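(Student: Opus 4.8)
The plan is to write out $M(\xxx, A)$ explicitly and evaluate at the point $\xxx = (a_1, \ldots, a_{m-1}, -1)$. Recall $M(\xxx, A) = x_1 A_1 + \cdots + x_m A_m \in \RRR[x_1,\ldots,x_m]^{u \times n}$ with $u = m+n-2$. The matrices $A_1, \ldots, A_{m-1}$ each place an $E_n$ block in consecutive rows (rows $k$ through $k+n-1$ for $A_k$), so $x_1 A_1 + \cdots + x_{m-1} A_{m-1}$ has a staircase-of-identities shape; the last matrix $A_m$ contributes an $E_{n-1}$ in its lower-left $(n-1)$-block and a $-\eee_1$ in the top-right corner. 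Substituting $x_k = a_k$ for $k \le m-1$ and $x_m = -1$, I would get an explicit $u \times n$ matrix whose entries are among $\{a_1, \ldots, a_{m-1}, -1, 0\}$, arranged so that each column is essentially a shifted copy of the vector $(a_1, \ldots, a_{m-1})\transpose$ overlaid with the identity contribution from $A_m$.

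**Translating the rank condition into a polynomial identity.** Since $M(\xxx,A)$ is $u \times n$ with $u \ge n$ (indeed $u - n = m - 2 \ge 1$), the condition $(a_1,\ldots,a_{m-1},-1) \in V_a(I_n(M(\xxx,A)))$ means exactly that the evaluated matrix has rank $< n$, i.e.\ its $n$ columns are linearly dependent, equivalently all $n\times n$ minors vanish. Because of the banded/staircase structure, I expect the maximal minors to be (up to sign) powers or simple multiples of a single polynomial in the $a_i$. The natural guess — and the content of the lemma — is that the relevant determinant is the resultant-type expression measuring whether $f(y) := y^{m-1} - a_{m-1}y^{m-2} - \cdots - a_2 y - a_1$ divides $y^u + 1$. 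Concretely, I would look for an $n$-dimensional kernel vector of the evaluated matrix of the form $(1, y, y^2, \ldots, y^{n-1})\transpose$ for a scalar $y$: the rows coming from the staircase part of $A_1,\ldots,A_{m-1}$ together with the $A_m$ rows should force, on one hand the recurrence $c_{j+m-1} = a_{m-1} c_{j+m-2} + \cdots + a_1 c_j$ (i.e.\ $f(y)=0$ governs the entries), and on the other hand a wrap-around closure condition from the $-\eee_1$ corner of $A_m$ that reads $y^u = -1$ on that kernel — which is precisely the statement that $y^u+1$ and $f(y)$ share the root $y$.

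**From a shared root to divisibility.** The cleanest route is: show $I_n(M(\xxx,A))$ vanishes at $(a_1,\ldots,a_{m-1},-1)$ iff the $n$ columns of the evaluated matrix are dependent, and then identify this dependence with the existence (over $\CCC$) of a common root $y$ of $f(y)$ and $y^u+1$; finally, since $y^u+1$ has $u$ distinct roots (no repeated roots, as $\gcd(y^u+1, uy^{u-1})=1$), "$f$ and $y^u+1$ have a common root" is equivalent to "every root of $f$ is a root of $y^u+1$" is equivalent to "$f \mid y^u+1$." Wait — that last chain needs $f$ to be squarefree or at least to have no root outside; the implication from one common root to full divisibility is where care is needed. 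The resolution: $\deg f = m-1$ and $u = m+n-2$, and the determinantal condition should not be "one common root" but rather a stronger simultaneous vanishing — the $n\times n$ minors being \emph{all} zero. I would argue that the kernel of the evaluated matrix, when nonzero, is spanned by vectors $(1,y,\ldots,y^{n-1})\transpose$ as $y$ ranges over the common roots, and that the rank drop being a full column-dependence forces the kernel to have dimension matching $\deg(\gcd(f, y^u+1))$ in the right way; combined with $\deg f = m-1$ this yields $f \mid y^u + 1$.

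**Main obstacle.** The hard part is the bookkeeping that extracts, from the sparse but somewhat irregular matrix $M((a_1,\ldots,a_{m-1},-1), A)$ — irregular because $A_m$ breaks the clean staircase pattern of $A_1,\ldots,A_{m-1}$ — the exact equivalence between its column rank being deficient and the divisibility $f(y) \mid y^u+1$, including getting the degree count to come out to $u = m+n-2$ rather than something off by one, and correctly handling the passage from "common root" to "divides". I expect the author proves this by exhibiting an explicit change of basis or by row/column reducing $M(\xxx,A)$ into a companion-matrix-like form, so that $I_n$ is generated by (a power of) the single polynomial that is the remainder of $y^u+1$ on division by $f(y)$, or equivalently the resultant $\mathrm{Res}_y(f(y), y^u+1)$ together with enough of its "partial" resultants to encode full divisibility rather than just coprimality.
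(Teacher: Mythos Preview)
Your Vandermonde-kernel approach does not work as stated. The vector $(1,y,\ldots,y^{n-1})\transpose$ satisfies only the ``middle'' rows $m,\ldots,n-1$ of $N\bbb=\zerovec$ (and those rows yield a recurrence whose characteristic polynomial is the \emph{reverse} of $f$, not $f$ itself); it fails the boundary rows $1,\ldots,m-1$ in general. For instance row~$2$ reads $a_2+a_1y=0$, which is not a consequence of $f(1/y)=0$. So ``$\rank N<n$'' does not translate into ``$f$ and $y^u+1$ share a root'' via such vectors, and the common-root-to-divisibility passage you flag as the main obstacle never gets started.

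The $\gcd$ dimension count you gesture at \emph{is} the right idea, but it should be run on the column space, not on putative Vandermonde kernel vectors. Identify $\CCC^u$ with $\CCC[y]/(y^u+1)$ via $e_i\leftrightarrow y^{i-1}$; a direct check shows that column $j$ of $N$ becomes $-y^{j-1}f(y)$ for every $j=1,\ldots,n$ (for $j=n$ one uses $y^u\equiv -1$ to absorb the stray $1$ in row~$1$). Hence $\rank N<n$ iff some nonzero $B(y)$ of degree $<n$ satisfies $(y^u+1)\mid Bf$. Writing $d=\gcd(f,y^u+1)$ and using that $y^u+1$ is squarefree, this forces $(y^u+1)/d\mid B$, so $u-\deg d\le n-1$, i.e.\ $\deg d\ge m-1=\deg f$, whence $f\mid y^u+1$; conversely take $B=(y^u+1)/f$.

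The paper does something quite different from either your sketch or the fix above: it introduces the impulse-response sequence $\{\lambda_t\}$ of the recurrence $\lambda_t=\sum_{k=1}^{m-1}a_{m-k}\lambda_{t-k}$ (with $\lambda_1=\cdots=\lambda_{m-2}=0$, $\lambda_{m-1}=1$) together with its annihilator ideal $\msIII(\{\lambda_t\})\subset\CCC[y]$, observes that this ideal is principal with generator $h(y)=f(y)$, and then proves by explicit cofactor expansions (Lemma~\ref{lem:non full rank}) that $\rank N<n$ is equivalent to $\lambda_{u+t}=-\lambda_t$ for all $t\ge1$, i.e.\ to $y^u+1\in\msIII(\{\lambda_t\})$, i.e.\ to $h\mid y^u+1$. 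There is no companion-matrix reduction or resultant; the paper's argument is a hands-on determinant computation, whereas the quotient-ring interpretation above gives the shorter conceptual route you were reaching for.
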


In order to prove this lemma, we need some preparation.
First we make the following

\begin{definition}\rm
Let $y$ be an indeterminate and 
$\{\mu_t\}_{t\geq 1}$ an infinite sequence of complex numbers.
We set $\msIII(\{\mu_t\}_{t\geq 1})
=\{f(y)\in\CCC[y]\mid f(y)=\sum_k c_k y^k$,
$\sum_k c_k\mu_{k+t}=0$ for any $t\geq 1\}$.
\end{definition}
It is easily verified that $\msIII(\{\mu_t\}_{t\geq 1})$ is an ideal of 
$\CCC[y]$.

Now let $a_1$, \ldots, $a_{m-1}\in\CCC$.
Set $\lambda_t=0$ for $1\leq t\leq m-2$, $\lambda_{m-1}=1$ and
$$
\lambda_{m-1+s}=
\det\begin{pmatrix}
a_{m-1}&a_{m-2}&\cdots&a_1\\
-1&\ddots&\ddots&&\ddots\\
&\ddots&\ddots&\ddots&&a_1\\
&&\ddots&\ddots&\ddots&\vdots\\
&&&\ddots&\ddots&a_{m-2}\\
&&&&-1&a_{m-1}
\end{pmatrix}
$$
for $s\geq 1$,
where the right hand side is an $s\times s$-determinant
(some $a_i$'s may not appear for small $s$).

By the first row expansion, we see the following

\begin{lemma}
\label{lem:rec rel}
For $t\geq m$, we have
$\lambda_t=\sum_{k=1}^{m-1}a_{m-k}\lambda_{t-k}$.
\end{lemma}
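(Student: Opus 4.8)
Write $B_s$ for the $s\times s$ matrix whose determinant is $\lambda_{m-1+s}$ in the definition; thus $B_s$ is the banded matrix with $a_{m-1}$ on the diagonal, $a_{m-2},\ldots,a_1$ on the successive superdiagonals, $-1$ on the subdiagonal, and $0$ elsewhere, so its $(i,c)$ entry is nonzero only when $0\le c-i+1\le m-1$. The plan is to do exactly what the preamble announces: expand $\det B_s$ along its first row. The first row is $(a_{m-1},a_{m-2},\ldots,a_1,0,\ldots,0)$, with $j$-th entry $a_{m-j}$ for $1\le j\le\min(m-1,s)$ and $0$ otherwise, so
$\lambda_{m-1+s}=\det B_s=\sum_{j=1}^{\min(m-1,s)}(-1)^{1+j}a_{m-j}\,M_{1,j}$,
where $M_{1,j}$ is the minor obtained by deleting row $1$ and column $j$ of $B_s$.

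The key step is to evaluate $M_{1,j}$. I would observe that the minor matrix splits into blocks $\left(\begin{smallmatrix}P&Q\\0&R\end{smallmatrix}\right)$: partition its rows (originally $2,\ldots,s$) into $\{2,\ldots,j\}$ and $\{j+1,\ldots,s\}$, and its columns (originally $1,\ldots,j-1,j+1,\ldots,s$) into $\{1,\ldots,j-1\}$ and $\{j+1,\ldots,s\}$. The lower-left block vanishes because an entry $(i,c)$ with $i\ge j+1$ and $c\le j-1$ has $i-c\ge 2$, outside the band. Re-indexing, the block $P$ (original rows $2,\ldots,j$ against original columns $1,\ldots,j-1$) is upper triangular with the subdiagonal $-1$'s of $B_s$ on its diagonal, so $\det P=(-1)^{j-1}$. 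The block $R$ is the bottom-right $(s-j)\times(s-j)$ corner of $B_s$, which, since $B_s$ has constant bands, is precisely the matrix $B_{s-j}$ appearing in the definition of $\lambda_{m-1+(s-j)}$ (with the empty determinant $\lambda_{m-1}=1$ when $j=s$). Hence $M_{1,j}=(-1)^{j-1}\lambda_{m-1+s-j}$, and since $(-1)^{1+j}(-1)^{j-1}=1$ we obtain $\lambda_{m-1+s}=\sum_{j=1}^{\min(m-1,s)}a_{m-j}\,\lambda_{m-1+s-j}$.

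It remains to match this with the stated formula $\lambda_t=\sum_{k=1}^{m-1}a_{m-k}\lambda_{t-k}$ for $t\ge m$. Putting $t=m-1+s$ with $s\ge1$, the two agree verbatim when $s\ge m-1$; when $1\le s\le m-2$ the identity above runs $j$ only up to $s$, but the extra terms $j=s+1,\ldots,m-1$ involve $\lambda_{m-1+s-j}$ with index in $\{1,\ldots,m-2\}$, which is $0$ by definition, so they may be appended freely and the two formulas coincide. This completes the argument.

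The only genuinely delicate point is the identification $M_{1,j}=(-1)^{j-1}\lambda_{m-1+s-j}$: one must check carefully that after deleting row $1$ and column $j$ the triangular block $P$ really has the subdiagonal $-1$'s (not any $a_i$) on its diagonal once rows and columns are relabelled, and that the corner block $R$ genuinely reproduces the same banded matrix $B_{s-j}$, including the small/degenerate cases $s-j\le m-2$ where the defining determinant has fewer bands or is empty. Everything else is routine cofactor bookkeeping.
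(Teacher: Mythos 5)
Your argument is correct and is exactly the paper's approach: the paper proves this lemma with the single phrase ``by the first row expansion,'' and your proposal carries out that cofactor expansion in full, including the correct block-triangular evaluation of the minors and the degenerate cases $1\le s\le m-2$ where the vanishing of $\lambda_1,\ldots,\lambda_{m-2}$ is needed.
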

Set
$h(y)=y^{m-1}-a_{m-1}y^{m-2}-\cdots-a_2y-a_1$.
By the above lemma, we see that $h(y)\in\msIII(\{\lambda_t\}_{t\geq 1})$.
Further, since $\lambda_t=0$ for $1\leq t\leq m-2$ and $\lambda_{m-1}=1$,
there is no polynomial in $\msIII(\{\lambda_t\}_{t\geq 1})$ whose degree
is less than $m-1$ except the zero polynomial,
i.e.,
$\msIII(\{\lambda_t\}_{t\geq 1})$ is generated by $h(y)$.

Set 
$$
N=M((a_1,\ldots, a_{m-1},-1),A)
=\begin{pmatrix}a_1&&&&1\\
a_2&\ddots\\
\vdots&\ddots&\ddots\\
a_{m-1}&&\ddots&\ddots\\
-1&\ddots&&\ddots&a_1\\
&\ddots&\ddots&&a_2\\
&&\ddots&\ddots&\vdots\\
&&&-1&a_{m-1}
\end{pmatrix}
$$
and for integers $c_1$, \ldots, $c_n$ with $1\leq c_1<\cdots<c_n\leq u$,
we denote by $[c_1, \ldots, c_n]_N$ the maximal minor of $N$ 
consisting of the $c_1$-th, \ldots, $c_n$-th rows of $N$.

Now we state the following

\begin{lemma}
\label{lem:non full rank}
Under the notation above, the following  conditions are equivalent.
\begin{enumerate}
\item
\label{item:rank less than n}
$\rank N<n$.
\item
\label{item:minors vanish}
$[i,m,m+1,\ldots,u]_N=0$ for $1\leq i\leq m-1$.
\item
\label{item:lambda u+1}
$\lambda_{u+t}=0$ for $1\leq t\leq m-2$ and $\lambda_{u+m-1}=-1$.
\item
\label{item:rec u}
$\lambda_{u+t}=-\lambda_t$ for $t\geq 1$.
\item
\label{item:yu}
$y^u+1\in\msIII(\{\lambda_t\}_{t\geq 1})$.
\end{enumerate}
\end{lemma}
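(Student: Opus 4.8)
The plan is to prove $(1)\Leftrightarrow(2)$ and $(1)\Leftrightarrow(3)$ directly from the shape of $N$, and then to dispatch $(3)\Leftrightarrow(4)\Leftrightarrow(5)$ by formal manipulation of the recurrence of Lemma~\ref{lem:rec rel} and of the definition of $\msIII$. Throughout I use that, for $1\leq j\leq n-1$, the $j$-th column of $N$ carries $(a_1,\dots,a_{m-1},-1)\transpose$ in rows $j,j+1,\dots,j+m-1$.

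For $(1)\Leftrightarrow(2)$, let $N'$ be the submatrix of $N$ on rows $m,m+1,\dots,u$ and let $r_1,\dots,r_{m-1}$ be rows $1,\dots,m-1$ of $N$. Then $N'$ has $n-1$ rows, and its leftmost $n-1$ columns form a triangular matrix with $-1$ on the diagonal (the $-1$ of column $j$ sits in its $j$-th row), so $\rank N'=n-1$ and $\rank N\in\{n-1,n\}$. Hence $\rank N=n$ iff some $r_i$ lies outside the row space of $N'$, i.e.\ iff the $n\times n$ minor $[i,m,m+1,\dots,u]_N$ is nonzero for some $1\leq i\leq m-1$. This is $(1)\Leftrightarrow(2)$.

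For $(1)\Leftrightarrow(3)$: since $\rank N\geq n-1$ we have $\ker N\subseteq\ker N'$, and $\ker N'$ is one-dimensional, so $\rank N<n$ iff $\ker N'\subseteq\ker N$. I claim $\ker N'$ is spanned by the vector $\vvv$ with $j$-th entry $v_j=\lambda_{u+1-j}$, that is $\vvv=(\lambda_u,\lambda_{u-1},\dots,\lambda_{m-1})\transpose$: the $s$-th row of $N'$ ($1\leq s\leq n-1$) gives $-v_s+a_{m-1}v_{s+1}+\dots+a_1v_{s+m-1}=0$ (terms with subscript $>n$ absent), and the substitution $v_j=\lambda_{u+1-j}$, $t=u+1-s$ turns this into $\lambda_t=\sum_{k=1}^{m-1}a_{m-k}\lambda_{t-k}$ with $t\geq m$, true by Lemma~\ref{lem:rec rel}, the absent terms contributing $0$ because $\lambda_1=\dots=\lambda_{m-2}=0$; and $v_n=\lambda_{m-1}=1\neq 0$. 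Thus $\rank N<n$ iff $r_i\cdot\vvv=0$ for all $1\leq i\leq m-1$. As $r_i$ has entries $a_i,a_{i-1},\dots,a_1$ in columns $1,\dots,i$ (and a further $1$ in column $n$ when $i=1$), one computes $r_i\cdot\vvv=\sum_{l=1}^{i}a_l\lambda_{u-i+l}$ for $2\leq i\leq m-1$ and $r_1\cdot\vvv=a_1\lambda_u+1$; completing these partial sums to full ones by Lemma~\ref{lem:rec rel} at index $u+m-i$ (resp.\ $u+m-1$) gives $r_i\cdot\vvv=\lambda_{u+m-i}-\sum_{k=1}^{m-1-i}a_{i+k}\lambda_{u+k}$ for $2\leq i\leq m-1$ and $r_1\cdot\vvv=\lambda_{u+m-1}+1-\sum_{k=1}^{m-2}a_{k+1}\lambda_{u+k}$. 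Now run $i=m-1,m-2,\dots,1$: setting $r_i\cdot\vvv=0$ and using the relations $\lambda_{u+1}=\dots=\lambda_{u+m-1-i}=0$ obtained at the earlier steps, the $i$-th equation collapses to $\lambda_{u+m-i}=0$ for $i\geq 2$ and to $\lambda_{u+m-1}=-1$ for $i=1$. So the system $r_i\cdot\vvv=0$ ($1\leq i\leq m-1$) is equivalent to (3), and with the previous paragraph we get $(1)\Leftrightarrow(2)\Leftrightarrow(3)$.

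Finally, $(4)\Rightarrow(3)$ is immediate from $\lambda_1=\dots=\lambda_{m-2}=0$, $\lambda_{m-1}=1$; for $(3)\Rightarrow(4)$, both $(\lambda_{u+t})_{t\geq 1}$ and $(-\lambda_t)_{t\geq 1}$ satisfy the order-$(m-1)$ recurrence $x_t=\sum_{k=1}^{m-1}a_{m-k}x_{t-k}$ for $t\geq m$ (the first even for all $t\geq 1$, since $u>m$) and agree on $t=1,\dots,m-1$ by (3), hence everywhere. And $(4)\Leftrightarrow(5)$ is essentially the definition: with $y^u+1=\sum_k c_k y^k$ we have $c_0=c_u=1$, $c_k=0$ otherwise, so $\sum_k c_k\lambda_{k+t}=\lambda_t+\lambda_{u+t}$, which vanishes for all $t\geq1$ exactly when (4) holds; combined with the fact noted just before the lemma that $\msIII(\{\lambda_t\}_{t\geq1})=(h(y))$, this reads $h(y)\mid y^u+1$, which is what the next lemma will use. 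The main obstacle is the middle paragraph: getting the index and sign bookkeeping right when matching rows of $N'$ to instances of Lemma~\ref{lem:rec rel}, identifying $\vvv$ in terms of the $\lambda_t$, and carrying out the triangular elimination; everything else is routine.
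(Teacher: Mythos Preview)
Your proof is correct. The route you take for the core implication differs from the paper's, and the difference is worth recording.

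For $(3)\Leftrightarrow(4)\Leftrightarrow(5)$ you and the paper do essentially the same thing: linear recurrences plus the definition of $\msIII$. For $(1)\Leftrightarrow(2)$ your argument (the last $n-1$ rows are already independent, so it suffices to test the minors obtained by adjoining one of the first $m-1$ rows) is the same observation the paper invokes in its $(2)\Rightarrow(1)$ step.

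The real divergence is in linking $(1)$ to $(3)$. The paper introduces an auxiliary $u\times u$ matrix $U$ whose last $n$ columns reproduce $N$, asserts the identity $\det({}^{t<}_{t<}U)=\lambda_{u+m-1-t}+\delta_{0,t}$, and then runs the cycle $(1)\Rightarrow(3)\Rightarrow(2)\Rightarrow(1)$ by expanding these determinants and the minors $[t+1,m,\dots,u]_N$ along their first rows. You instead stay inside $N$: you show directly that the one-dimensional kernel of the bottom block $N'$ is spanned by $\vvv=(\lambda_u,\lambda_{u-1},\dots,\lambda_{m-1})\transpose$, so $\rank N<n$ becomes the linear system $r_i\cdot\vvv=0$ ($1\le i\le m-1$); rewriting each $r_i\cdot\vvv$ via the recurrence at index $u+m-i$ turns the system into a triangular one that is literally condition $(3)$. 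Your approach avoids introducing $U$ and the unproved determinant identity, at the cost of a little index bookkeeping in identifying $\vvv$ and carrying out the elimination; the paper's approach packages that bookkeeping into determinants. The two are Cramer-dual: your kernel vector $\vvv$ has entries that are, up to sign, exactly the cofactors $\det(N'_{\hat\jmath})$ the paper is implicitly computing.

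One small cosmetic point: the last sentence of your argument (about $h(y)\mid y^u+1$) is commentary on how the lemma will be used rather than part of the proof of the lemma itself; it is correct but could be dropped.
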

\begin{proof}
Let
$$
U=\begin{pmatrix}
a_{m-1}&a_{m-2}&\cdots&a_1&&1\\
-1&\ddots&\ddots&&\ddots\\
&\ddots&\ddots&\ddots&&a_1\\
&&\ddots&\ddots&\ddots&\vdots\\
&&&\ddots&\ddots&a_{m-2}\\
&&&&-1&a_{m-1}
\end{pmatrix}
$$
be a $u\times u$ matrix.
Then $\det({}^{t<}_{t<}U)=\lambda_{u+m-1-t}+\delta_{0,t}$ for $0\leq t\leq u-1$,
where $\delta_{0,t}$ is the Kronecker's delta.

\ref{item:rank less than n}$\Longrightarrow$\ref{item:lambda u+1}:
Since ${}_{m-2<}U=N$ and $\rank N<n$ by assumption, we see that
$\det({}^{t<}_{t<}U)=0$ for $0\leq t\leq m-2$.
Thus, we see that
$\lambda_{u+m-1-t}+\delta_{0,t}=0$ for $0\leq t\leq m-2$.

\ref{item:lambda u+1}$\Longrightarrow$\ref{item:minors vanish}:
We see by the first row expansions of $\det({}_{t<}^{t<}U)$ 
and $[t+1,m,\ldots, u]$
and the assumption that
\begin{eqnarray*}
0&=&\lambda_{u+m-1-t}+\delta_{0,t}\\
&=&\sum_{k=1}^{m-1}a_{m-k}\lambda_{u+m-1-t-k}+\delta_{0,t}\\
&=&\sum_{k=m-t-1}^{m-1}a_{m-k}\lambda_{u+m-1-t-k}+\delta_{0,t}\\
&=&\sum_{s=1}^{t+1}a_{t+2-s}\lambda_{u+1-s}+\delta_{0,t}\\
&=&[t+1,m,\ldots, u]_N
\end{eqnarray*}
for $0\leq t\leq m-2$.

\ref{item:minors vanish}$\Longrightarrow$\ref{item:rank less than n}
follows from the fact that
the last $n-1$ rows of $N$ are linearly independent,
\ref{item:lambda u+1}$\iff$\ref{item:rec u} follows from the facts that
$\lambda_1=\cdots=\lambda_{m-2}=0$ and $\lambda_{m-1}=1$ and
Lemma \ref{lem:rec rel}
and
\ref{item:rec u}$\iff$\ref{item:yu} follows from the definition of
$\msIII(\{\lambda_t\}_{t\geq 1})$.
\end{proof}

Since $(a_1,\ldots, a_{m-1},-1)\in V_a(I_n(M(\xxx,A)))$
if and only if $\rank N<n$ and $y^u+1\in\msIII(\{\lambda_t\}_{t\geq 1})$
if and only if $h(y)$ divides $y^u+1$, we see Lemma \ref{lem:key} by
Lemma \ref{lem:non full rank}.

Now we recall the following facts about determinantal varieties
(see e.g. \cite[p.151 and p.243]{har92}).

\begin{fact}
\label{fac:deg}
Let $X$ be a $u\times n$ matrix of indeterminates.
Then the projective variety in $\PPP_\CCC\CCC^{u\times n}$
defined by $I_n(X)$ has degree ${u\choose n-1}$ and codimension $u-n+1$.
\end{fact}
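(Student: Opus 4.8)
This is classical (the cited reference settles it), so I will only sketch the argument I would reconstruct; the idea is to resolve the determinantal variety by an incidence variety fibred over a projective space and then compute with Chern and Segre classes. Write $D\subseteq\PPP_\CCC\CCC^{u\times n}$ for the projective variety defined by $I_n(X)$, i.e.\ the locus of $[A]$ with $\rank A<n$, and assume $u\ge n$ (the case of interest).

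A nonzero matrix $A$ has rank $<n$ precisely when $A\bbb=\zerovec$ for some nonzero $\bbb\in\CCC^n$, so I would introduce the incidence variety
\[
Z=\{(A,[\bbb])\in\CCC^{u\times n}\times\PPP_\CCC\CCC^n\mid A\bbb=\zerovec\}.
\]
Let $0\to\msOOO(-1)\to\CCC^n\otimes\msOOO\to Q\to0$ be the tautological sequence on $\PPP_\CCC\CCC^n$ and put $\mathcal{E}\define\CCC^u\otimes Q^\vee$, a vector bundle of rank $u(n-1)$. For fixed $[\bbb]$ the equations $A\bbb=\zerovec$ impose $u$ independent linear conditions on $A$, and the first projection identifies $Z$ with the total space of $\mathcal{E}$; in particular $Z$ is irreducible of dimension $(n-1)+u(n-1)=(u+1)(n-1)$. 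The second projection $Z\to\CCC^{u\times n}$ has image the affine cone over $D$, and over a matrix of rank exactly $n-1$ the kernel line is unique, so this map is birational onto its image. Hence $D$ is irreducible with $\dim D=(u+1)(n-1)-1$, which gives codimension $un-1-\bigl((u+1)(n-1)-1\bigr)=u-n+1$.

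For the degree I would projectivize this resolution. Applying $\CCC^u\otimes-$ to the dual of $\CCC^n\otimes\msOOO\twoheadrightarrow Q$ gives a fibrewise-linear inclusion $\mathcal{E}\hookrightarrow\CCC^{u\times n}\otimes\msOOO$, which induces a morphism $\phi\colon\PPP(\mathcal{E})\to\PPP_\CCC\CCC^{u\times n}$ from the projective bundle $\pi\colon\PPP(\mathcal{E})\to\PPP_\CCC\CCC^n$ (the bundle of lines in $\mathcal{E}$). By the discussion above $\phi$ is birational onto $D$, and it pulls the hyperplane class of $\PPP_\CCC\CCC^{u\times n}$ back to the relative hyperplane class $\xi$ of $\PPP(\mathcal{E})$. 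Using the projection formula and the standard pushforward $\pi_*\bigl(\xi^{u(n-1)-1+j}\bigr)=s_j(\mathcal{E})$ for the Segre class ($s=c^{-1}$),
\[
\deg D=\int_{\PPP(\mathcal{E})}\xi^{(u+1)(n-1)-1}=\int_{\PPP_\CCC\CCC^n}s_{n-1}(\mathcal{E}).
\]
From the tautological sequence one finds $c(Q)=(1-h)^{-1}$, where $h$ is the hyperplane class on $\PPP_\CCC\CCC^n$; then $c(Q^\vee)=(1+h)^{-1}$ and $s(Q^\vee)=1+h$; since $\mathcal{E}\cong(Q^\vee)^{\oplus u}$ we get $s(\mathcal{E})=(1+h)^u$, hence $s_{n-1}(\mathcal{E})=\binom{u}{n-1}h^{n-1}$ and the integral over $\PPP_\CCC\CCC^n$ equals $\binom{u}{n-1}$.

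The one point genuinely needing care is the birationality of $\phi$ (equivalently of $Z\to D$): it reduces to the elementary facts that $D$ is irreducible --- which was obtained for free from the bundle structure $Z\to\PPP_\CCC\CCC^n$ --- and that a generic rank $n-1$ matrix has a one-dimensional kernel; the rest is bookkeeping, once the projectivization and Segre-class conventions are fixed consistently. A route avoiding algebraic geometry is to read both invariants off the Eagon--Northcott resolution of $\RRR[X]/I_n(X)$ together with the resulting rational Hilbert series, or to invoke the Giambelli--Thom--Porteous formula directly; each recovers codimension $u-n+1$ and degree $\binom{u}{n-1}$.
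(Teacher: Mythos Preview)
Your argument is correct. The paper does not give its own proof of this fact; it merely records it with a reference to Harris's \emph{Algebraic Geometry} (pp.~151 and 243). The incidence-variety resolution $Z=\{(A,[\bbb]):A\bbb=\zerovec\}\to\PPP_\CCC\CCC^n$, its identification with the total space of $\CCC^u\otimes Q^\vee$, and the Segre-class computation you outline are precisely the standard approach found in that reference, so there is nothing further to compare.
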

Note that ${u\choose n-1}={u\choose m-1}$ and $u-n+1=m-1$ since $u=m+n-2$.
Note also that there are ${u\choose m-1}$ monic factors of $y^u+1$ of degree $m-1$
in $\CCC[y]$.

In view of this fact, we make the following

\begin{definition}\rm
For $B=(B_1;\cdots; B_m)\in\CCC^{u\times n\times m}$,
we set
$\varphi_B\colon\CCC^m\to\CCC^{u\times n}$,
$(\alpha_1,\ldots, \alpha_m)\mapsto\alpha_1B_1+\cdots+\alpha_mB_m$.
\end{definition}
Then by Lemma \ref{lem:key}, Fact \ref{fac:deg} and Bezout's theorem,
we see the following

\begin{cor}
Let $\PPP_\CCC(\image\varphi_A)$ be the linear subspace of $\PPP_\CCC\CCC^{u\times n}$
defined by $\image\varphi_A$.
Then $\PPP_\CCC(\image\varphi_A)$ and $V_p(I_n(X))$ intersect transversely
at ${u\choose m-1}$ distinct points,
where $V_p(I)$ denotes the projective variety defined by the homogeneous ideal $I$.
\end{cor}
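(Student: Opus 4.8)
The corollary is a straightforward application of Bézout's theorem, and the strategy is to verify that the hypotheses of that theorem are met and that the intersection number it predicts matches the number of points we can name explicitly. First I would record the dimension bookkeeping already assembled in the excerpt: $V_p(I_n(X))$ has codimension $m-1$ in $\PPP_\CCC\CCC^{u\times n}$ and degree $\binom{u}{m-1}$ by Fact \ref{fac:deg}, while $\PPP_\CCC(\image\varphi_A)$ is a linear subspace of dimension $m-1$ (since $A_1,\ldots,A_m$ are visibly linearly independent, $\varphi_A$ is injective, so $\image\varphi_A$ has dimension $m$ in $\CCC^{u\times n}$). Thus the expected dimension of the intersection is $(m-1)+(m-1)-(m-1)=m-1$ minus\ldots\ wait: a linear space of projective dimension $m-1$ meeting a variety of codimension $m-1$ generically meets in dimension $0$, i.e.\ in $\deg = \binom{u}{m-1}$ points counted with multiplicity. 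So the count on the nose is $\binom{u}{m-1}=\binom{u}{n-1}$.

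Next I would exhibit the $\binom{u}{m-1}$ intersection points concretely. A point of $\PPP_\CCC(\image\varphi_A)$ is represented by $M((\alpha_1,\ldots,\alpha_m),A)$, and after scaling we may assume $\alpha_m=-1$ (the locus $\alpha_m=0$ is easily checked to miss $V_p(I_n(X))$, since there $M((\alpha_1,\ldots,\alpha_{m-1},0),A)$ visibly contains $E_n$ as a submatrix and so has rank $n$). Writing $\alpha_j=a_j$ for $j<m$, Lemma \ref{lem:key} tells us that $M((a_1,\ldots,a_{m-1},-1),A)$ lies on $V_p(I_n(X))$ precisely when $h(y)=y^{m-1}-a_{m-1}y^{m-2}-\cdots-a_1$ is a monic degree-$(m-1)$ factor of $y^u+1$ over $\CCC$. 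Since $y^u+1$ has $u$ distinct roots (the $(2u)$-th roots of unity that are not $u$-th roots of unity), it factors into distinct linear factors, and choosing which $m-1$ of the $u$ roots go into $h$ gives exactly $\binom{u}{m-1}$ distinct monic factors, hence exactly $\binom{u}{m-1}$ distinct points of $\PPP_\CCC(\image\varphi_A)\cap V_p(I_n(X))$.

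The key step — and the only place real work is needed — is to upgrade ``$\binom{u}{m-1}$ points'' plus ``Bézout predicts $\binom{u}{m-1}$'' to ``the intersection is transverse.'' The cleanest route is the standard fact that if a proper intersection of a linear space $L$ with a variety $Z$ consists of exactly $\deg Z$ reduced points, then the intersection is transverse at each of them: Bézout gives $\sum_{P} i(P; L\cap Z)=\deg Z\cdot\deg L = \deg Z$, every local intersection multiplicity $i(P;L\cap Z)$ is a positive integer, and there are already $\deg Z$ distinct points $P$, so each multiplicity equals $1$, which for an intersection with a linear space is equivalent to transversality of $L$ with $Z$ at $P$ (in particular $Z$ is smooth there and $T_PZ + L = \CCC^{u\times n}$). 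So the argument reduces to: (i) the intersection is proper (dimension $0$), which follows because we have only finitely many points and the expected dimension is $0$ — or more carefully, because $\PPP_\CCC(\image\varphi_A)$ is not contained in $V_p(I_n(X))$ and any component of the intersection has dimension $\geq 0$ with equality forced once we know the point set is finite, which the factor-counting of Lemma \ref{lem:key} gives; and (ii) the point count is exactly the degree, done above. I expect step (i) — arguing cleanly that the intersection is genuinely zero-dimensional rather than merely that its $\CCC$-points are finite — to be the only subtle point; one handles it by noting Lemma \ref{lem:key} describes \emph{all} $\CCC$-points of the intersection (not just a subset), so the intersection, being a finite set of reduced-looking closed points, has dimension $0$, and then Bézout applies in its strong form to conclude transversality and reducedness simultaneously.
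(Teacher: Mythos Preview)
Your argument is correct and is exactly what the paper intends---the paper merely cites Lemma~\ref{lem:key}, Fact~\ref{fac:deg}, and B\'ezout's theorem without elaboration, and you have supplied the standard details (complementary dimensions, $\binom{u}{m-1}$ distinct intersection points from the factor count for $y^u+1$, and multiplicity one forced by the point count matching the degree, hence transversality). One small slip worth fixing: when $\alpha_m=0$ the matrix $M((\alpha_1,\ldots,\alpha_{m-1},0),A)$ does not literally contain $E_n$ as a submatrix, but if $k_0$ is the least index with $\alpha_{k_0}\neq0$ then rows $k_0,\ldots,k_0+n-1$ form a lower-triangular block with $\alpha_{k_0}$ on the diagonal, so your conclusion that it has rank $n$ is still correct.
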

By the implicit function theorem, we see the following fact.

\begin{cor}
There is a Euclidean open neighborhood $\msUUU$ of $A$ in $\RRR^{u\times n\times m}$
such that if $B\in\msUUU$, then $\varphi_B$ is injective and the number of real
points of $V_p(I_n(M(\xxx,B)))\subset\PPP_\CCC\CCC^m$ is the number of real monic
polynomials of degree $m-1$ which divide $y^u+1$,
where we say a point of a complex projective space is real if all possible ratios
of its homogeneous coordintes
are real numbers.
\end{cor}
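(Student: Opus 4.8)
The plan is to combine the transversality in the preceding corollary with the holomorphic implicit function theorem, Bezout's theorem, and complex conjugation. Write $r={u\choose m-1}={u\choose n-1}$, which by Fact \ref{fac:deg} is the degree of $V_p(I_n(X))$ and also the number of monic degree-$(m-1)$ factors of $y^u+1$ in $\CCC[y]$. Since $\PPP_\CCC(\image\varphi_A)$ has dimension $m-1$, the linear map $\varphi_A$ is injective; injectivity of a linear map $\RRR^m\to\RRR^{u\times n}$ is an open condition on its matrix, so on a Euclidean open neighborhood $\msUUU_0$ of $A$ the map $\varphi_B$ is injective and identifies $\PPP_\CCC\CCC^m$ with the $(m-1)$-plane $\PPP_\CCC(\image\varphi_B)$, under which $V_p(I_n(M(\xxx,B)))$ corresponds to $\PPP_\CCC(\image\varphi_B)\cap V_p(I_n(X))$.

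At $B=A$ this intersection is, by the preceding corollary, a set of $r$ distinct points $P_1,\dots,P_r$, transverse at each. By Lemma \ref{lem:key} the $r$ pairwise distinct points $[a_1:\dots:a_{m-1}:-1]$ attached to the $r$ monic degree-$(m-1)$ factors of $y^u+1$ all lie in $V_p(I_n(M(\xxx,A)))$; so these are exactly the $P_i$, and each $P_i$ lies in the chart $x_m\neq0$. Transversality in particular means $V_p(I_n(X))$ is smooth at each $P_i$, so the holomorphic implicit function theorem yields pairwise disjoint Euclidean open sets $W_i\ni P_i$ in $\PPP_\CCC\CCC^m$, a Euclidean open neighborhood $\msUUU\subseteq\msUUU_0$ of $A$, and continuous maps $s_i\colon\msUUU\to W_i$ (locally restrictions of holomorphic maps) with $s_i(A)=P_i$ and $V_p(I_n(M(\xxx,B)))\cap W_i=\{s_i(B)\}$ for all $B\in\msUUU$. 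After shrinking $\msUUU$ I would also ensure $V_p(I_n(M(\xxx,B)))$ is finite for $B\in\msUUU$: over $\CCC$, the locus of $B$ with $\dim V_p(I_n(M(\xxx,B)))\geq1$ is Zariski closed (upper semicontinuity of fibre dimension for the projection of $\{(B,[x]):\text{all }n\text{-minors of }M(x,B)\text{ vanish}\}$ to the $B$-space) and misses $A$. Bezout's theorem then gives $\#V_p(I_n(M(\xxx,B)))\leq r$ for $B\in\msUUU$, and since $s_1(B),\dots,s_r(B)$ are $r$ distinct points of it, $V_p(I_n(M(\xxx,B)))=\{s_1(B),\dots,s_r(B)\}$.

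It remains to count real points. The $n\times n$ minors of $M(\xxx,B)$ are polynomials in $x_1,\dots,x_m$ with coefficients polynomial (over $\ZZZ$) in the entries of $B$, so for real $B$ the set $V_p(I_n(M(\xxx,B)))$ is conjugation-invariant; hence $\overline{s_i(B)}=s_j(B)$ for some $j$. As $B\to A$, $s_i(B)\to P_i$, $s_j(B)\to P_j$, and $\overline{s_i(B)}\to\overline{P_i}$, while $\{P_1,\dots,P_r\}$ is conjugation-invariant and the $W_i$ are disjoint; so after a further shrink of $\msUUU$: if $P_i$ is real then $j=i$ and $s_i(B)=\overline{s_i(B)}$ is real, whereas if $P_i$ is not real then $\overline{P_i}=P_j$ with $j\neq i$ and $s_i(B)\neq\overline{s_i(B)}=s_j(B)$ is not real. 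Thus the number of real points of $V_p(I_n(M(\xxx,B)))$ equals the number of real $P_i$; and $[a_1:\dots:a_{m-1}:-1]$ is real exactly when $a_1,\dots,a_{m-1}\in\RRR$, i.e.\ when $y^{m-1}-a_{m-1}y^{m-2}-\cdots-a_1$ has real coefficients, so this number is the number of real monic degree-$(m-1)$ divisors of $y^u+1$, as claimed.

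The most delicate point I anticipate is the simultaneous shrinking of $\msUUU$ so that injectivity of $\varphi_B$, the implicit function theorem near every $P_i$ with disjoint neighborhoods $W_i$, and finiteness of $V_p(I_n(M(\xxx,B)))$ all hold at once, together with the upper-semicontinuity input that rules out stray or positive-dimensional intersection away from the $P_i$; by contrast, once the holomorphic sections $s_i$ are in hand, the conjugation argument that decides which of them remain real is immediate.
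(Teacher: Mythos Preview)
Your argument is correct and follows exactly the line the paper indicates: the paper's entire proof is the sentence ``By the implicit function theorem, we see the following fact,'' and your proposal is a careful unpacking of that sentence---transversality from the preceding corollary, the holomorphic implicit function theorem to track each of the $\binom{u}{m-1}$ intersection points under perturbation of $B$, Bezout plus semicontinuity to rule out extra components, and conjugation invariance to match real points with real factors of $y^u+1$. The one point the paper leaves entirely implicit and that you handle explicitly is the conjugation argument showing that the number of \emph{real} intersection points is preserved under real perturbation; this is a genuine (if routine) ingredient, and your treatment of it is sound.
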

We denote the number of real monic polynomials of degree $m-1$ which divides 
$y^u+1$ by $\alpha(m,n)$.
Then we see the following

\begin{lemma}
\label{lem:alpha m n}
$$
\alpha(m,n)=
\left\{
\begin{array}{ll}
{u/2\choose (m-1)/2}&\quad\mbox{if $m$ and $n$ are odd,}\\
{(u-1)/2\choose (m-2)/2}&\quad\mbox{if $m$ is even and $n$ is odd,}\\
{(u-1)/2\choose (m-1)/2}&\quad\mbox{if $m$ is odd and $n$ is even and}\\
0&\quad\mbox{if $m$ and $n$ are even.}
\end{array}
\right.
$$
\end{lemma}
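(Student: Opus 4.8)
The plan is to count monic factors of $y^u+1$ in $\RRR[y]$ of degree $m-1$, where $u=m+n-2$, by first factoring $y^u+1$ completely over $\RRR$ into irreducible factors and then counting ways to assemble a monic degree-$(m-1)$ divisor from them. The roots of $y^u+1$ are the $2u$-th roots of unity that are not $u$-th roots of unity, i.e.\ $\zeta^{2k+1}$ where $\zeta=e^{\pi i/u}$ and $0\le k\le u-1$. A real root occurs only when $\zeta^{2k+1}=-1$, which happens iff $u$ is odd (giving the single factor $y+1$); when $u$ is even there are no real roots. The remaining roots come in conjugate pairs, each pair contributing an irreducible real quadratic factor, so $y^u+1$ has $\lfloor u/2\rfloor$ irreducible quadratic factors and, when $u$ is odd, additionally the linear factor $y+1$.

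Next I would translate "monic real divisor of degree $m-1$" into "pick a sub(multi)set of these irreducible factors whose degrees sum to $m-1$"; since $y^u+1$ is squarefree (its derivative $uy^{u-1}$ shares no root with it, as $0$ is not a root), every divisor is a product of distinct irreducible factors. So I must choose some number $j$ of quadratic factors and possibly the linear factor, with $2j$ or $2j+1$ equal to $m-1$. The parity of $m-1$ decides which case applies, and the availability of the linear factor $y+1$ is governed by the parity of $u=m+n-2$, equivalently by whether $m$ and $n$ have the same parity. I would then go through the four parity cases for $(m,n)$:

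\begin{itemize}
\item $m,n$ both odd: then $m-1$ is even and $u$ is even, so no linear factor is available and I must choose $j=(m-1)/2$ of the $u/2$ quadratic factors, giving $\binom{u/2}{(m-1)/2}$.
\item $m$ even, $n$ odd: then $m-1$ is odd and $u$ is odd, so the linear factor exists and must be used; I choose $(m-2)/2$ of the $(u-1)/2$ quadratic factors, giving $\binom{(u-1)/2}{(m-2)/2}$.
\item $m$ odd, $n$ even: then $m-1$ is even and $u$ is odd, so the linear factor exists but cannot be used (using it would leave an odd degree to fill with quadratics); I choose $(m-1)/2$ of the $(u-1)/2$ quadratic factors, giving $\binom{(u-1)/2}{(m-1)/2}$.
\item $m,n$ both even: then $m-1$ is odd and $u$ is even, so there is no linear factor and an odd target degree cannot be met by quadratics alone; hence $0$.
\end{itemize}

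Summing up the count in each case yields exactly the stated formula for $\alpha(m,n)$. The one point requiring a little care — and the closest thing to an obstacle — is the bookkeeping of parities: one must check in each case both that the chosen degrees sum correctly to $m-1$ and that no other combination is possible (in particular, that the linear factor is forced in, forced out, or unavailable, as claimed). The squarefreeness of $y^u+1$ is what guarantees that distinct choices of irreducible factors give distinct divisors, so no overcounting occurs; with that in hand the argument is a direct case analysis.
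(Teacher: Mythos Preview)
Your argument is correct and is exactly the natural one: factor $y^u+1$ over $\RRR$ into $\lfloor u/2\rfloor$ irreducible quadratics together with the linear factor $y+1$ when $u$ is odd, then count subsets of irreducible factors whose degrees sum to $m-1$, splitting into the four parity cases for $(m,n)$. The paper itself gives no proof of this lemma --- it is stated without argument as an elementary count --- so there is nothing further to compare.
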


By replacing $\msUUU$ to a smaller neighborhood if necessary,
we may assume that $\rank (M(\aaa,B)_{\leq n-1})=n-1$
for any $B\in\msUUU$ and $\aaa\in\RRR^m\setminus\{\zerovec\}$
(cf.\ \cite[Corollary 4.20]{sms17}).
Then we have the following fact.

\begin{lemma}
\label{lem:number gen}
Suppose $B\in\msUUU$.
Then
$\#\{[\psi(\aaa,\bbb)]\in\PPP_\RRR\RRR^p\mid
M(\aaa,B)\bbb=\zerovec,
\aaa\in\RRR^m\setminus\{\zerovec\},
\bbb\in\RRR^n\setminus\{\zerovec\}\}
=\alpha(m,n)$,
where $[\xxx]$ denotes the point of $\PPP_\RRR\RRR^p$ defined by
$\xxx\in\RRR^p\setminus\{\zerovec\}$.
\end{lemma}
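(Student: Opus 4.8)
The plan is to set up a bijection between the set $S$ in the statement and the set $R$ of real points of $V_p(I_n(M(\xxx,B)))\subset\PPP_\CCC\CCC^m$; by the corollary obtained via the implicit function theorem we already know $\#R=\alpha(m,n)$, so this will suffice. Two observations drive the argument. First, $\psi(c\aaa,d\bbb)=cd\,\psi(\aaa,\bbb)$, so $[\psi(\aaa,\bbb)]$ depends only on $[\aaa]$ and $[\bbb]$. Second, if $\aaa\neq\zerovec$ and $M(\aaa,B)\bbb=\zerovec$ for some $\bbb\neq\zerovec$, then $\rank M(\aaa,B)<n$, hence $[\aaa]\in R$, and the standing condition $\rank(M(\aaa,B)_{\leq n-1})=n-1$ forces $\dim\ker M(\aaa,B)=1$, so $\bbb$ is determined up to a scalar by $[\aaa]$; write $\bbb([\aaa])$ for it. Then $g([\aaa])\define[\psi(\aaa,\bbb([\aaa]))]$ is a well-defined point of $\PPP_\RRR\RRR^p$ as soon as $\psi(\aaa,\bbb([\aaa]))\neq\zerovec$; moreover, whenever $g$ is defined at $[\aaa]\in R$ we have $g([\aaa])\in S$, and conversely every element of $S$ is $g([\aaa])$ for some such $[\aaa]$. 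So the lemma reduces to showing that $g$ is defined on all of $R$ and is injective, for then $g\colon R\to S$ is a bijection.

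Both points I would obtain by shrinking $\msUUU$ once more so that, for every $B\in\msUUU$: (i) $a_1\neq0$ for every real point $[a_1:\cdots:a_m]$ of $V_p(I_n(M(\xxx,B)))$; and (ii) $B_{m-1}\bbb$ and $B_m\bbb$ are linearly independent for every $\bbb\in\RRR^n\setminus\{\zerovec\}$. For (i): when $B=A$, Lemma~\ref{lem:key} exhibits $\binom{u}{m-1}$ points of the variety, namely $[a_1:\cdots:a_{m-1}:-1]$ for each monic divisor $y^{m-1}-a_{m-1}y^{m-2}-\cdots-a_1$ of $y^u+1$, and by the corollary coming from Bezout's theorem these exhaust the variety; since $y^u+1$ is not divisible by $y$, each such point has $a_1\neq0$. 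As the real points of the variety are finite in number and vary continuously with $B$ near $A$ (implicit function theorem and transversality), the open condition (i) survives on a smaller neighborhood. For (ii): at $B=A$ one writes out $A_{m-1}\bbb$ and $A_m\bbb$, which are $(0,\ldots,0,b_1,\ldots,b_n)$ and $(-b_n,0,\ldots,0,b_1,\ldots,b_{n-1})$ with the $\bbb$-segments beginning in rows $m-1$ and $m$ respectively; using $m\geq3$ one checks these are linearly independent for every $\bbb\neq\zerovec$, and since this is an open condition tested over the compact space $\PPP_\RRR\RRR^{n-1}\times\PPP_\RRR\RRR^1$ it too persists near $A$.

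Granting (i) and (ii): for $[\aaa]\in R$ the first block of $\psi(\aaa,\bbb([\aaa]))$ is $a_1\bbb([\aaa])\neq\zerovec$ (using $m\geq3$), so $g$ is defined on all of $R$. For injectivity, suppose $g([\aaa])=g([\aaa'])$ and put $\bbb=\bbb([\aaa])$, $\bbb'=\bbb([\aaa'])$. Comparing the first blocks of $\psi(\aaa,\bbb)$ and $\psi(\aaa',\bbb')$ and using $a_1,a_1'\neq0$ gives $[\bbb]=[\bbb']$, so after rescaling representatives (which preserves $M(\aaa',B)\bbb'=\zerovec$) we may assume $\bbb'=\bbb$ and $\psi(\aaa,\bbb)=\psi(\aaa',\bbb)$; comparing blocks $1,\ldots,m-2$ then gives $a_i=a_i'$ for $1\leq i\leq m-2$. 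Thus $\aaa-\aaa'$ is supported on the last two coordinates, and subtracting $M(\aaa',B)\bbb=\zerovec$ from $M(\aaa,B)\bbb=\zerovec$ yields $(a_{m-1}-a_{m-1}')B_{m-1}\bbb+(a_m-a_m')B_m\bbb=\zerovec$, which by (ii) forces $\aaa=\aaa'$. Hence $g$ is injective and $\#S=\#R=\alpha(m,n)$.

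The main obstacle is the injectivity of $g$: since $\psi$ discards $a_m$ altogether and retains $a_{m-1}$ only through the last block $a_{m-1}\bbb^{\leq p-(m-2)n}$, the map cannot be inverted coordinate by coordinate, and the two ``missing'' coordinates of $\aaa$ must be recovered indirectly by feeding the difference $\aaa-\aaa'$ back into the relation $M(\cdot,B)\bbb=\zerovec$ --- which is precisely where condition (ii) is used. Verifying (ii) at $B=A$ is the only genuinely computational step, and propagating (i) and (ii) from $A$ to a neighborhood is where continuity of the intersection points and compactness of projective space come in.
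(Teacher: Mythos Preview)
Your argument is correct. The paper offers no proof of this lemma, presenting it as an immediate consequence of the two preceding corollaries together with the rank condition $\rank(M(\aaa,B)_{\le n-1})=n-1$ already imposed on $\msUUU$; what you have written is exactly the natural bijection $[\aaa]\mapsto[\psi(\aaa,\bbb([\aaa]))]$ the paper tacitly intends, with the well-definedness and injectivity issues made rigorous via the extra conditions (i) and (ii) obtained by shrinking $\msUUU$ once more (a further shrinking the paper itself freely permits just before stating the lemma).
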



\section{Plural typical ranks of some formats of 3-tensors}

In this section, we show that in  certain formats of 3-tensors,
there are plural typical ranks.
We use the notation of the previous section.

First we recall the following fact.

\begin{fact}[{\cite[Proposition 2.4, Lemma 3.5, Theorems 7.3 and 8.1]{sms17}}]
\label{fact:bit dis}
If $m-1$ and $n-1$ are not bit-disjoint, then $\trank(n,p,m)=\{p,p+1\}$,
where two positive integers are bit-disjoint if there are no 1's
in the same place of their binary notation.
\end{fact}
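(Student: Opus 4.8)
The goal is to show $\trank(n,p,m)=\{p,p+1\}$ whenever $m-1$ and $n-1$ are not bit-disjoint, citing results from \cite{sms17}. The plan is to exhibit both $p$ and $p+1$ as typical ranks. Since $\min\trank(p,n,m)=p$ by Fact, and $\trank(p,n,m)=\trank(n,p,m)$, it suffices to show that $p$ is a typical rank of $n\times p\times m$-tensors (equivalently that the set $\msVVV\cap\{\rank T=p\}$ has nonempty Euclidean interior) and, separately, that the full rank is not constantly $p$ on a neighborhood, so some rank $>p$ must also be typical; combined with the known fact from \cite{sms17} that typical ranks of semi-tall tensors in this format are among $\{p,p+1\}$ (the "former part" mentioned in the introduction, which holds at $p=(m-1)(n-1)+1$), this forces $\trank=\{p,p+1\}$.

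First I would translate the bit-disjointness hypothesis into a statement about factorizations of $y^u+1$. Here $u=m+n-2$, so $m-1$ and $n-1$ bit-disjoint would mean $(m-1)+(n-1)=u$ has binary digits that are exactly the union, i.e.\ that $\binom{u}{m-1}$ is odd (by Kummer's/Lucas' theorem). The key point from the previous section is that $\alpha(m,n)$ — the number of real monic degree-$(m-1)$ factors of $y^u+1$ — is given explicitly in Lemma~\ref{lem:alpha m n}, and Lemma~\ref{lem:number gen} says that for $B$ in the neighborhood $\msUUU$ of $A$, exactly $\alpha(m,n)$ projective points $[\psi(\aaa,\bbb)]$ arise from kernel vectors of $M(\aaa,B)$. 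When $m-1$ and $n-1$ are not bit-disjoint, I would check case by case using Lemma~\ref{lem:alpha m n} that $\alpha(m,n)<\binom{u}{m-1}$ strictly — because some of the degree-$(m-1)$ factors of $y^u+1$ are necessarily non-real (they come in conjugate pairs pairing up complex roots), and non-bit-disjointness guarantees a real factor "breaks" in a way that reduces the count below the full complex count $\binom{u}{m-1}=\binom{u}{n-1}$, which equals $p$ is not the right comparison — rather the relevant dimension count is whether $\dim U(\cdot)=p$ can hold.

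Then I would invoke Lemma~\ref{lem:rank p}: for $T\in\msVVV$, $\rank T=p$ iff $\dim U(\mu(\sigma(T)))=p$, i.e.\ iff the $p$ vectors $\psi(\aaa_j,\bbb_j)$ coming from the $\alpha(m,n)$ available kernel directions span $\RRR^p$. Since each projective kernel point contributes only a one-dimensional span but we need $p$ independent vectors, a necessary condition is $\alpha(m,n)\ge p$; and when $\alpha(m,n)<p$ this fails for every $B\in\msUUU$, showing that $\rank T>p$ on the nonempty open set $\tau(\sigma(\msUUU\cap\msOOO))$-image in $\msVVV$ — hence $p$ is NOT a typical rank... wait, that is the wrong direction. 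Let me reconsider: actually \cite{sms17} already established (the "former part") that $\{p,p+1\}$ are exactly the typical ranks iff there is a nonsingular bilinear map, and here we are in a case possibly without one. The correct route is that the neighborhood $\msUUU$ of the specific tensor $A$ consists of tensors whose corresponding $T$ has rank exactly $p+1$: one shows $\dim U<p$ throughout $\msUUU$ (using $\alpha(m,n)<p$, which is exactly where non-bit-disjointness is used via Lemma~\ref{lem:alpha m n}), so $\rank T\ge p+1$ on a nonempty Euclidean open set, giving $p+1\in\trank$; and since $\min\trank=p$ and by \cite[Lemma 3.5]{sms17} the typical ranks lie in $\{p,p+1\}$, both must occur. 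The main obstacle I anticipate is the bookkeeping: verifying $\alpha(m,n)<p$ in each parity case and confirming that the failure of bit-disjointness is precisely the combinatorial condition that makes this strict inequality hold (as opposed to $\alpha(m,n)=\binom{u}{m-1}$ in the bit-disjoint case), and then checking that $\dim U<p$ genuinely follows from having too few kernel directions even when those directions might be repeated with multiplicity — one must rule out that a single kernel point contributes more than one independent $\psi$-vector, which uses the rank condition $\rank(M(\aaa,B)_{\le n-1})=n-1$ arranged by shrinking $\msUUU$.

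Given the structure of the paper, the actual proof is almost certainly a one-line citation: the four cited items from \cite{sms17} (Proposition 2.4 for $\min\trank=p$ or the rank-$p$ criterion, Lemma 3.5 for $\trank\subseteq\{p,p+1\}$, Theorems 7.3 and 8.1 for the constructions witnessing each value under a bit-disjointness-type hypothesis) together immediately yield the claim, with the combinatorics of $\alpha(m,n)$ versus $\binom{u}{m-1}$ being the content already packaged in those theorems. So in writing this up I would (i) recall $\trank(n,p,m)=\trank(p,n,m)$ and $\min\trank(p,n,m)=p$; (ii) cite \cite[Lemma 3.5]{sms17} for $\trank(p,n,m)\subseteq\{p,p+1\}$; (iii) cite \cite[Theorems 7.3 and 8.1]{sms17} (in combination with Proposition 2.4) to get that $p+1$ is typical precisely when $m-1,n-1$ are not bit-disjoint; and (iv) conclude $\trank(n,p,m)=\{p,p+1\}$. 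The only real mathematical work, should a self-contained argument be wanted, is the $\alpha(m,n)<p$ verification sketched above, and that is where I would spend the effort.
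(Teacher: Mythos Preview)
The paper gives no proof of this statement: it is stated as a \emph{Fact} and is entirely imported from \cite{sms17} via the four cited items. Your final paragraph recognizes this, and the one-line citation you describe there is exactly what the paper does.

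However, the bulk of your proposal --- the attempt to derive the statement from the $\alpha(m,n)$ machinery of Section~4 --- contains a genuine error. You claim that non-bit-disjointness of $m-1$ and $n-1$ is ``precisely the combinatorial condition that makes $\alpha(m,n)<p$ hold.'' This is false, and the paper itself shows why: in the proof of Theorem~\ref{thm:main}, the inequality $\alpha(m,n)<p$ is verified only in limited ranges (e.g.\ for $m=7$ only when $n\leq 12$, for $m=8$ only when $n\leq 14$), while the theorem is asserted for wider ranges (e.g.\ $m=7$, $n\leq 16$). The remaining cases are covered not by $\alpha(m,n)<p$ but by Example~\ref{ex:known}, which is a \emph{consequence} of Fact~\ref{fact:bit dis}. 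So Fact~\ref{fact:bit dis} and Lemma~\ref{lem:less than p} are two \emph{independent} sufficient conditions for plural typical ranks, neither implying the other, and the paper combines them to get Theorem~\ref{thm:main}. Trying to prove Fact~\ref{fact:bit dis} via $\alpha(m,n)<p$ is therefore circular at best and simply wrong in general: for large enough $n$ with $m-1,n-1$ not bit-disjoint, one has $\alpha(m,n)\geq p$.

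A smaller point: your identification of what the ``former part'' refers to is off. In the introduction, the ``former part'' is the implication \emph{nonsingular bilinear map $\Rightarrow$ $\trank=\{p,p+1\}$}, not the biconditional; and the link between bit-disjointness and nonsingular bilinear maps is precisely what \cite[Proposition~2.4]{sms17} supplies (via Hopf--Stiefel type obstructions). That is the content being cited, not anything involving $\alpha(m,n)$.
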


\begin{example}\rm
\label{ex:known}
$\trank(n,p,m)=\{p,p+1\}$ in the following cases.
\begin{enumerate}
\item
Both $m$ and $n$ are even.
\item
$m=5$ and $n\equiv 5,6,7,8\pmod 8$.
\item
$m=6$ and $n\equiv 2,4,5,6,7,8\pmod 8$.
\item
$m=7$ and $n\equiv 3,4,5,6,7,8\pmod 8$.
\item
$m=8$ and $n\equiv 2,3,4,5,6,7,8\pmod 8$.
\item
$m=9$ and $n\equiv 9,10,11,12,13,14,15,16\pmod {16}$.
\end{enumerate}
\end{example}

Set
$
A''=(A_2;\cdots;A_{m-2};A_m;-A_{m-1};-A_{1})$.
Then there is a permutation matrix $P\in\glin(u,\RRR)$ 
such that
${}_{p<}\fl_1(PA'')=-E_u$.
Set $A'=PA''$ and $W_0=\fl_1(A')_{\leq p}$.
Further set $\rho\colon\RRR^{u\times n\times m}\to\RRR^{u\times n\times m}$,
$B=(B_1;\cdots;B_m)\mapsto P(B_2;\cdots;B_{m-2};B_m;-B_{m-1};-B_{1})$
and $\UUUUU=\mu^{-1}(\rho(\msUUU))$,
where $\mu$ is the map defined in Definition \ref{def:sigma nu}.
Then $\UUUUU$ is an open neighborhood of $W_0$.
Further, we see the following fact by Lemma \ref{lem:number gen}.

\begin{lemma}
If $T\in\sigma^{-1}(\UUUUU)$, then
$\#\{[\psi(\aaa,\bbb)]\in\PPP_\RRR\RRR^p\mid
M(\aaa,\mu(\sigma(T)))\bbb=\zerovec,
\aaa\in\RRR^m\setminus\{\zerovec\},
\bbb\in\RRR^n\setminus\{\zerovec\}\}
=\alpha(m,n)$.
\end{lemma}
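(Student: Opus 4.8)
The plan is to translate the statement back through the diagram of maps in Definition~\ref{def:sigma nu} and reduce it to Lemma~\ref{lem:number gen}, which already counts the relevant points for tensors $B\in\msUUU$. First I would unwind the definitions of $A''$, $P$, $A'=PA''$, $\rho$, $\UUUUU=\mu^{-1}(\rho(\msUUU))$ and $W_0=\fl_1(A')_{\leq p}$. The key bookkeeping observation is that $\rho$ is, up to the fixed permutation matrix $P$, just a relabelling and sign change of the $m$ slices $B_1,\ldots,B_m$; hence for $B\in\RRR^{u\times n\times m}$ the solution set $\{(\aaa,\bbb):M(\aaa,B)\bbb=\zerovec\}$ and the solution set for $\rho(B)$ differ only by an invertible linear change of the $\aaa$-variables (the transpose-inverse of the slice-permutation-and-sign matrix), which does not change whether $\aaa$ or $\bbb$ is zero, nor the dimension of the span of the $\psi(\aaa,\bbb)$, nor — since $\psi$ is bilinear and the scaling is by a nonzero constant on each coordinate block after accounting for $\bbb$ — the projective points $[\psi(\aaa,\bbb)]$. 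So the count $\alpha(m,n)$ is preserved under $\rho$.

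Next I would bring in $\sigma$ and $\mu$. For $T\in\sigma^{-1}(\UUUUU)$ we have $W\define\sigma(T)\in\UUUUU=\mu^{-1}(\rho(\msUUU))$, so $\mu(W)\in\rho(\msUUU)$, i.e.\ $\mu(\sigma(T))=\rho(B)$ for some $B\in\msUUU$. Applying Lemma~\ref{lem:number gen} to this $B$ gives that the number of points $[\psi(\aaa,\bbb)]$ with $M(\aaa,B)\bbb=\zerovec$, $\aaa\neq\zerovec$, $\bbb\neq\zerovec$, equals $\alpha(m,n)$; and by the $\rho$-invariance established in the first step, the same count holds with $M(\aaa,\mu(\sigma(T)))=M(\aaa,\rho(B))$ in place of $M(\aaa,B)$. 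That is exactly the assertion of the lemma. I would also double-check that $\mu\colon\RRR^{u\times p}\to\msOOO$ is a bijection onto its image (it is, since $\mu(W)=\fl_1^{-1}(W,-E_u)$ has an obvious inverse $Y\mapsto\fl_1(Y)_{\leq p}$ on $\msOOO$), so that $\UUUUU=\mu^{-1}(\rho(\msUUU))$ is genuinely an open set and membership $W\in\UUUUU$ is equivalent to $\mu(W)\in\rho(\msUUU)$.

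The main obstacle is making the ``$\rho$ only changes the $\aaa$-variables by an invertible linear substitution'' claim fully precise, in particular tracking the permutation matrix $P$ on the \emph{left} (the $u$-index) as well. Left multiplication by $P$ sends $M(\aaa,B)$ to $PM(\aaa,\text{permuted }B)$, and since $P$ is invertible, $PM(\aaa,B')\bbb=\zerovec$ iff $M(\aaa,B')\bbb=\zerovec$; so the left permutation is harmless for the solution set in $(\aaa,\bbb)$. The slice permutation-with-signs on the right acts as $M(\aaa,\rho(B)) = M(L\aaa, B)$ for a suitable $L\in\glin(m,\RRR)$ depending only on the fixed reindexing, whence $(\aaa,\bbb)$ solves the $\rho(B)$-system iff $(L\aaa,\bbb)$ solves the $B$-system; the map $(\aaa,\bbb)\mapsto(L\aaa,\bbb)$ is a bijection on $(\RRR^m\setminus\{\zerovec\})\times(\RRR^n\setminus\{\zerovec\})$, and one checks directly from the formula for $\psi$ that $[\psi(\aaa,\bbb)]$ is unchanged up to the action already absorbed, so the point sets have the same cardinality. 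Once this is spelled out the proof is a one-line appeal to Lemma~\ref{lem:number gen}.

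\begin{proof}
Let $T\in\sigma^{-1}(\UUUUU)$ and put $W=\sigma(T)\in\UUUUU$. Since $\mu$ is injective with inverse $Y\mapsto\fl_1(Y)_{\leq p}$ on $\msOOO$, membership $W\in\UUUUU=\mu^{-1}(\rho(\msUUU))$ means precisely $\mu(W)\in\rho(\msUUU)$, say $\mu(\sigma(T))=\rho(B)$ with $B=(B_1;\cdots;B_m)\in\msUUU$.

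The map $\rho$ replaces the slices $(B_1,\ldots,B_m)$ by $(B_2,\ldots,B_{m-2},B_m,-B_{m-1},-B_1)$ and then left-multiplies by the fixed permutation matrix $P\in\glin(u,\RRR)$. Hence there is $L\in\glin(m,\RRR)$, depending only on this reindexing and the signs, with
$$
M(\aaa,\rho(B))=P\,M(L\aaa,B)
$$
for all $\aaa\in\RRR^m$. Since $P$ is invertible, for $\aaa\in\RRR^m\setminus\{\zerovec\}$ and $\bbb\in\RRR^n\setminus\{\zerovec\}$ we have $M(\aaa,\rho(B))\bbb=\zerovec$ if and only if $M(L\aaa,B)\bbb=\zerovec$. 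The assignment $(\aaa,\bbb)\mapsto(L\aaa,\bbb)$ is a bijection of $(\RRR^m\setminus\{\zerovec\})\times(\RRR^n\setminus\{\zerovec\})$ onto itself, and one checks from the definition of $\psi$ that it induces a bijection of the corresponding point sets in $\PPP_\RRR\RRR^p$. Therefore
$$
\#\{[\psi(\aaa,\bbb)]\mid M(\aaa,\rho(B))\bbb=\zerovec,\ \aaa\neq\zerovec,\ \bbb\neq\zerovec\}
=\#\{[\psi(\aaa,\bbb)]\mid M(\aaa,B)\bbb=\zerovec,\ \aaa\neq\zerovec,\ \bbb\neq\zerovec\}.
$$
By Lemma~\ref{lem:number gen} (applicable since $B\in\msUUU$) the right-hand side equals $\alpha(m,n)$. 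As $\mu(\sigma(T))=\rho(B)$, the left-hand side is the quantity in the statement, so it equals $\alpha(m,n)$.
\end{proof}
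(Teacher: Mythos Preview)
Your approach is the paper's: the paper just says the lemma follows from Lemma~\ref{lem:number gen}, and you have written out the natural reduction. Writing $\mu(\sigma(T))=\rho(B)$ for some $B\in\msUUU$ and observing $M(\aaa,\rho(B))=P\,M(L\aaa,B)$ for an invertible $L$ (so the solution pairs $([\aaa],[\bbb])$ for $\rho(B)$ correspond bijectively to those for $B$) is exactly the intended argument.

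There is one step that is not justified and cannot be checked ``from the definition of $\psi$'' alone: the claim that $(\aaa,\bbb)\mapsto(L\aaa,\bbb)$ induces a bijection of the two sets of projective points $[\psi(\cdot,\cdot)]$. Since $\psi(\aaa,\bbb)$ depends only on $a_1,\ldots,a_{m-1}$ and on a truncation of $\bbb$, there is no formula relating $[\psi(\aaa,\bbb)]$ to $[\psi(L\aaa,\bbb)]$; well-definedness of the induced map already needs injectivity of $([\aaa],[\bbb])\mapsto[\psi(\aaa,\bbb)]$ on the $\rho(B)$-solution set, which is precisely what you want to conclude. The clean fix uses that $Y:=\rho(B)\in\msOOO$ (it lies near $A'=\rho(A)$, for which ${}_{p<}\fl_1(A')=-E_u$ by construction). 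For $Y\in\msOOO$ one has $M(\aaa,Y)\bbb=0\iff\fl_1(Y)(\aaa\kprod\bbb)=0$, and invertibility of ${}_{p<}\fl_1(Y)$ forces the last $u$ entries of $\aaa\kprod\bbb$ to be $\nu(Y)$ applied to the first $p$; hence the whole vector $\aaa\kprod\bbb$, and therefore $([\aaa],[\bbb])$, is recovered from $\psi(\aaa,\bbb)$. This gives injectivity (and $\psi(\aaa,\bbb)\neq\zerovec$) on the $\rho(B)$-solution set directly, and together with your count of $\alpha(m,n)$ solution pairs yields the exact equality. With this adjustment your proof is complete and matches the paper's route.
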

Since $\sigma^{-1}(\UUUUU)$ 
contains $\tau(W_0)$, $\sigma^{-1}(\UUUUU)$ is not an empty set.
Therefore, we see by Lemma \ref{lem:rank p} that if $\alpha(m,n)<p$, then
there exists a non-empty Euclidean open subset $\sigma^{-1}(\UUUUU)$ of $\RRR^{n\times p\times m}$
consisting of tensors of rank larger than $p$.
Further, since we see by \cite[Theorem 8.1]{sms17}
that typical ranks of $n\times p\times m$ tensors are less than or equal to
$p+1$, we see the following fact.

\begin{lemma}
\label{lem:less than p}
If $\alpha(m,n)<p$, then $\trank(n,p,m)=\{p,p+1\}$.
\end{lemma}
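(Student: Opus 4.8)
The plan is to combine the three ingredients that have just been assembled: (i) the explicit construction of $A'$, $W_0$, and the open set $\UUUUU$ together with the Lemma that for every $T\in\sigma^{-1}(\UUUUU)$ the number of real projective points $[\psi(\aaa,\bbb)]$ with $M(\aaa,\mu(\sigma(T)))\bbb=\zerovec$ equals $\alpha(m,n)$; (ii) the rank criterion Lemma~\ref{lem:rank p}, which says that for $T\in\msVVV$ one has $\rank T=p$ iff $\dim U(\mu(\sigma(T)))=p$; and (iii) the upper bound $\trank(n,p,m)\subseteq\{p,p+1\}$ from \cite[Theorem 8.1]{sms17}, together with $\min\trank(n,p,m)=\min\trank(p,n,m)=p$ from Fact~\ref{fac:deg}'s neighbourhood (really from Catalisano--Geramita--Gimigliano and ten Berge as recorded earlier). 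Given these, the statement $\alpha(m,n)<p\Rightarrow\trank(n,p,m)=\{p,p+1\}$ follows once we exhibit a non-empty Euclidean open subset of $\RRR^{n\times p\times m}$ all of whose members have rank $>p$; then $p+1$ must be typical (it is the only remaining candidate below the upper bound) and $p$ is typical because it is the generic rank, so it is attained on a Zariski-dense, hence Euclidean-dense, hence open-containing set.

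First I would record that $\sigma^{-1}(\UUUUU)$ is a non-empty Euclidean open subset of $\RRR^{n\times p\times m}$: it is open because $\sigma$ is continuous on the open set $\msVVV$ and $\UUUUU$ is open, and it is non-empty because $\tau(W_0)\in\sigma^{-1}(\UUUUU)$, using the identity $\sigma(\tau(W))=W$ from the Remark after Definition~\ref{def:sigma nu} and the fact that $W_0\in\UUUUU$ (indeed $\mu(W_0)=\rho(A)\cdot$-preimage lies in $\rho(\msUUU)$ since $A\in\msUUU$). Next, for an arbitrary $T\in\sigma^{-1}(\UUUUU)$, set $W=\mu(\sigma(T))$. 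By the displayed Lemma the set $\{[\psi(\aaa,\bbb)]\in\PPP_\RRR\RRR^p\mid M(\aaa,W)\bbb=\zerovec,\ \aaa\neq\zerovec,\ \bbb\neq\zerovec\}$ has exactly $\alpha(m,n)$ elements. The subspace $U(W)\subseteq\RRR^p$ is by definition spanned by the vectors $\psi(\aaa,\bbb)$ ranging over such pairs; hence $U(W)$ is spanned by at most $\alpha(m,n)$ vectors (one representative from each of the $\alpha(m,n)$ projective points, since scaling a solution pair only rescales $\psi(\aaa,\bbb)$), so $\dim U(W)\le\alpha(m,n)<p$. Therefore $\dim U(\mu(\sigma(T)))<p$, and Lemma~\ref{lem:rank p} gives $\rank T\neq p$. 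Combined with $\min\trank(n,p,m)=p$ (so $\rank T\ge p$ on a dense set, but here we only need that no tensor has rank $<p$ generically — in fact $\rank T>p$ for all $T$ in our open set since ranks $<p$ cannot occur on an open set when the generic rank is $p$), every $T\in\sigma^{-1}(\UUUUU)$ has $\rank T\ge p+1$.

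Finally I would assemble the conclusion. The open set $\sigma^{-1}(\UUUUU)$ consists of tensors of rank $\ge p+1$; by \cite[Theorem 8.1]{sms17} no tensor of format $n\times p\times m$ has rank exceeding $p+1$ on any open set (the typical ranks are $\le p+1$), so in fact a non-empty open subset of $\sigma^{-1}(\UUUUU)$ consists of rank-exactly-$(p+1)$ tensors, whence $p+1\in\trank(n,p,m)$. On the other hand $p=\grank(p,n,m)=\min\trank(p,n,m)=\min\trank(n,p,m)$, so $p\in\trank(n,p,m)$. Since $\trank(n,p,m)\subseteq\{p,p+1\}$, we conclude $\trank(n,p,m)=\{p,p+1\}$. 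The only genuinely non-routine point is the bound $\dim U(W)\le\alpha(m,n)$: one must be careful that distinct spanning vectors $\psi(\aaa,\bbb)$ that are non-proportional in $\RRR^p$ correspond to distinct points of $\PPP_\RRR\RRR^p$ — this is exactly why the preceding section arranged (by shrinking $\msUUU$, via \cite[Corollary 4.20]{sms17}) that $\rank M(\aaa,B)_{\le n-1}=n-1$, which forces $\bbb$ to be determined up to scalar by $\aaa$ and makes $\psi(\aaa,\bbb)$ well-defined up to scalar on each projective solution point — so there is really nothing left to do but cite the Lemma. Everything else is bookkeeping with the maps $\sigma,\tau,\mu$ already set up in Definition~\ref{def:sigma nu}.
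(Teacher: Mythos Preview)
Your proposal is correct and follows essentially the same route as the paper: exhibit the non-empty open set $\sigma^{-1}(\UUUUU)$, use the preceding lemma to bound $\dim U(\mu(\sigma(T)))\le\alpha(m,n)<p$, apply Lemma~\ref{lem:rank p} to get $\rank T\neq p$, and then invoke \cite[Theorem 8.1]{sms17} and $\min\trank=p$. The one place your write-up is looser than it needs to be is the parenthetical justification that $\rank T>p$ rather than $\rank T<p$; the clean reason (implicit in the paper) is simply that $T\in\sigma^{-1}(\UUUUU)\subset\msVVV$ forces $\fl_2(T)^{\leq p}$ to be nonsingular, hence $\rank T\ge\rank\fl_2(T)=p$, so $\rank T\neq p$ immediately gives $\rank T\ge p+1$ for \emph{every} $T$ in the open set---no appeal to generic rank or semialgebraic bookkeeping is needed.
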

%
%
Now we state the following

\begin{thm}
\label{thm:main}
Suppose $3\leq m\leq n$ and $p=(m-1)(n-1)+1$.
Then $\trank(n,p,m)=\{p,p+1\}$ in the following cases.
\begin{enumerate}
\item
$m=3$ or $m=4$.
\item
$m=5$ and $n\leq 26$ or $n=28$.
\item
$m=6$ and $n\leq 34$.
\item
$m=7$ and $n\leq 16$.
\item
$m=8$ and $n\leq 16$.
\end{enumerate}
\end{thm}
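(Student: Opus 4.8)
The plan is to obtain every case from the two sufficient conditions already assembled in the previous sections. Since $\min\trank(n,p,m)=p$ we automatically have $p\in\trank(n,p,m)$, and by \cite[Theorem 8.1]{sms17} no typical rank of an $n\times p\times m$-tensor exceeds $p+1$; hence it is enough, in each listed case, to produce a non-empty Euclidean open subset of $\RRR^{n\times p\times m}$ all of whose members have rank strictly larger than $p$, since such a set forces $p+1\in\trank(n,p,m)$ and therefore $\trank(n,p,m)=\{p,p+1\}$. Two criteria give such a set: Lemma~\ref{lem:less than p} does so whenever $\alpha(m,n)<p$, and Fact~\ref{fact:bit dis} does so whenever $m-1$ and $n-1$ are not bit-disjoint. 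So the whole proof reduces to checking, for each pair $(m,n)$ in the statement, that at least one of these two conditions holds.

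For case (1) the $\alpha$-criterion alone will cover every admissible $n$. Lemma~\ref{lem:alpha m n} gives $\alpha(3,n)\le(n+1)/2$ and $\alpha(4,n)\le(n+1)/2$ (with $\alpha=0$ when $n$ is even), and $(n+1)/2<(m-1)(n-1)+1$ holds for all $n\ge 3$ by a routine linear inequality; so Lemma~\ref{lem:less than p} applies throughout.

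For cases (2)--(5) the quantity $\alpha(m,n)$ is, by Lemma~\ref{lem:alpha m n}, a binomial coefficient $\binom{r}{k}$ with $r$ of order $n/2$ and $k=\lfloor(m-1)/2\rfloor\ge 2$, hence a polynomial in $n$ of degree at least $2$, which eventually overtakes the linear quantity $p=(m-1)(n-1)+1$; thus the $\alpha$-criterion covers only an initial range of $n$, and the remaining values must be handled by bit-disjointness. For $m\in\{5,6,7,8\}$ the $n$ for which $m-1$ and $n-1$ are not bit-disjoint are exactly the residue classes recorded in Example~\ref{ex:known}, so in each listed range only a short list of $n$ is not already covered by Fact~\ref{fact:bit dis}: those with $n\bmod 8\in\{1,2,3,4\}$ for $m=5$, $\{1,3\}$ for $m=6$, $\{1,2\}$ for $m=7$, and $\{1\}$ for $m=8$. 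For each such residual $n$ one substitutes the appropriate expression of Lemma~\ref{lem:alpha m n} into the inequality $\alpha(m,n)<p$ and checks it directly: for $m=5$ it holds iff $n\le 25$ ($n$ odd) or $n\le 28$ ($n$ even); for $m=6$ it is trivial ($\alpha=0$) when $n$ is even and holds iff $n\le 33$ when $n$ is odd; for $m=7$ only $n\in\{9,10\}$ remain, and for $m=8$ only $n=9$, where in all three cases $\alpha(m,n)=\binom{7}{3}=35<p$. Applying Lemma~\ref{lem:less than p} to these residual values completes cases (2)--(5).

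The only delicate part is the bookkeeping of the last paragraph: for each fixed small $m$ one must pin down the exact threshold on $n$ below which $\alpha(m,n)<p$, and confirm that every $n$ in the asserted range either lies below that threshold or is caught by the bit-disjointness criterion. The ranges in the statement are precisely the initial segments on which the two tools together suffice; at the first $n$ that is omitted both tools fail simultaneously -- for instance $m=5,\ n=27$ has $\alpha(5,27)=\binom{15}{2}=105=p$ while $26=11010_2$ and $4=100_2$ are bit-disjoint, and $m\in\{7,8\},\ n=17$ has $\alpha(m,17)>p$ with $16=10000_2$ bit-disjoint from $6=110_2$ and $7=111_2$ -- which is why such $n$ do not appear.
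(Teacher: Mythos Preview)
Your proof is correct and follows essentially the same approach as the paper: combine the $\alpha$-criterion (Lemma~\ref{lem:less than p}) with the bit-disjointness criterion (Fact~\ref{fact:bit dis}/Example~\ref{ex:known}) and verify case by case that every pair $(m,n)$ in the statement satisfies at least one of them. You carry out the numerical check in somewhat more detail than the paper (which simply records the ranges in which $\alpha(m,n)<p$ and then invokes Example~\ref{ex:known}), and your closing paragraph explaining why the ranges stop where they do is a nice addition, but the argument is the same.
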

\begin{proof}
By Lemma \ref{lem:alpha m n} and computation, we see that $\alpha(m,n)<p$
in the following cases:
(1) $m=3$ or $m=4$,
(2) $m=5$ and $n\leq 26$ or $n=28$,
(3) $m=6$ and $n\leq 34$,
(4) $m=7$ and $n\leq 12$,
(5) $m=8$ and $n\leq 14$ and
(6) $m=9$ and $n=10$.
Thus, we see the result by Example \ref{ex:known} and Lemma \ref{lem:less than p}.
\end{proof}

\begin{remark}\rm
Set $m=3$ and $n=5$.
Then $m-1$ and $n-1$ are bit-disjoint.
However, by Theorem \ref{thm:main}, we see that 
$\trank(n,p,m)=\{p,p+1\}$.
Thus the converse of Fact \ref{fact:bit dis} does not valid.
\end{remark}


\begin{thebibliography}{CGG02}

\bibitem[CGG02]{cgg02}
M.~V. Catalisano, A.~V. Geramita, and A.~Gimigliano, \emph{Ranks of tensors,
  secant varieties of {S}egre varieties and fat points}, Linear Algebra Appl.
  \textbf{355} (2002), 263--285. 

\bibitem[CGG08]{cgg08}
M.~V. Catalisano, A.~V. Geramita, and A.~Gimigliano, \emph{On the 
ideals of secant varieties to certain rational
  varieties}, J. Algebra \textbf{319} (2008), no.~5, 1913--1931. 

\bibitem[Har92]{har92}
Joe Harris, \emph{Algebraic geometry, A first course.}, Graduate Texts in Mathematics, vol. 133,
  Springer-Verlag, New York, 1992, 


\bibitem[SSM16]{ssm16}
Toshio Sakata, Toshio Sumi and Mitsuhiro Miyazaki, \emph{Algebraic and
  {C}omputational {A}spects of {R}eal {T}ensor {R}anks}, SpringerBriefs in
  Statistics, Springer, 2016.

\bibitem[SMS15]{sms15}
Toshio Sumi, Mitsuhiro Miyazaki and Toshio Sakata, \emph{Typical ranks of
  {$m\times n\times (m-1)n$} tensors with {$3\leq m\leq n$} over the real
  number field}, Linear Multilinear Algebra \textbf{63} (2015), no.~5,
  940--955. 


\bibitem[SMS17]{sms17}
Toshio Sumi, Mitsuhiro Miyazaki and Toshio Sakata,
Typical ranks for 3-tensors, nonsingular bilinear maps and determinantal ideals,
Journal of Algebra,
{\bf 471} 409--453 (Feb. 2017)


\bibitem[SSM13]{ssm13}
Toshio Sumi, Toshio Sakata and Mitsuhiro Miyazaki,
\emph{Typical ranks for $m\times n\times (m-1)n$ tensors with $m\leq n$},
Linear Algebra and its Application, 
\textbf {438} 953--958 (Jan. 2013). 


\bibitem[tB00]{tb00}
Jos M.~F. ten Berge, \emph{The typical rank of tall three-way arrays},
  Psychometrika \textbf{65} (2000), no.~4, 525--532. 

\end{thebibliography}
\end{document}